\newtheorem{thm}{Theorem}
\newtheorem{lem}[thm]{Lemma}
\newtheorem*{keylem}{Key Lemma}
\newtheorem*{thma}{Theorem A}
\newtheorem*{thmb}{Theorem B}
\newcommand{\hyperg}[4]{\: _2\! F_1\! \left[ \begin{array}{c} #1,\, #2 \\ #3 \end{array} ;\,  #4 \right]}
\newcommand{\RePt}{\mathrm{Re}\,}
\newcommand{\ImPt}{\mathrm{Im}\,}
\newcommand{\ball}{\mathbb{B}}
\newcommand{\sphere}{\mathbb{S}}
\newcommand{\calU}{\mathcal{U}}
\newcommand{\bfi}{\mathbf{i}}
\newcommand{\bfrho}{\boldsymbol{\rho}}
\newcommand{\bbB}{\mathbb{B}}
\newcommand{\bbC}{\mathbb{C}}
\newcommand{\bbR}{\mathbb{R}}
\newcommand{\bbH}{\mathbb{H}}
\newcommand{\bbN}{\mathbb{N}}
\begin{document}

\title[Two classes of integral operators]{Two classes of integral operators over the Siegel upper half-space}

\thanks{The first author was supported by the National Natural Science
Foundation of China grants 11571333, 11471301; the fourth author was supported by
Natural Science Foundation of Zhejiang province grant (No. LQ13A010005), the Scientific
Research and Teachers project of Huzhou University(No. RP21028) and partially by the
National Natural Science Foundation of China grant 11571105.}

\author[C. Liu]{Congwen Liu}

\address{School of Mathematical Sciences,
         University of Science and Technology of China,
         Hefei, Anhui 230026,
         People's Republic of China.\\
and \\
Wu Wen-Tsun Key Laboratory of Mathematics,
USTC, Chinese Academy of Sciences, Hefei,
         People's Republic of China.}

\email{cwliu@ustc.edu.cn}

\author[Y. Liu]{Yi Liu}
\address{School of Mathematical Sciences,
         University of Science and Technology of China,
         Hefei, Anhui 230026,
         People's Republic of China.}
\email{ly0717@mail.ustc.edu.cn}

\author[P. Hu]{Pengyan Hu}
\address{Colledge of Mathematics and Statistics,
         Shenzhen University, Shenzhen, Guangdong 518060,
         People's Republic of China.}

\email{pyhu@szu.edu.cn}

\author[L. Zhou]{Lifang Zhou}
\address{Department of Mathematics,
Huzhou University,
Huzhou, Zhejiang 313000,
People¡¯s Republic of China}
\email{lfzhou@zjhu.edu.cn}

\begin{abstract}
We determine exactly when two classes of integral operators are bounded on weighted $L^p$ spaces
over the Siegel upper half-space.
\end{abstract}
\keywords{Siegel upper half-space; Bergman type operators; weighted $L^p$ spaces; boundedness}
\subjclass{Primary 32A35, 47G10; Secondary 32A26, 30E20}

\maketitle

\section{Introduction}
\label{intro}

This short note is motivated by the work of Kures and Zhu \cite{KZ06}, in which the authors characterized
the boundedness of two classes of integral operators induced by Bergman type kernels on weighted
Lebesgue spaces on the unit ball $\ball$ of $\mathbb{C}^n$.

Fix three real parameters $a, b, c$ and define two integral operators $\mathcal{T}_{a,b,c}$
and $\mathcal{S}_{a,b,c}$ by
\begin{align*}
\mathcal{T}_{a,b,c} f(z) ~:=~& (1-|z|^2)^{a} \int\limits_{\ball} \frac { (1-|w|^2)^{b}} { (1-\langle z,w\rangle)^{c}} f(w) d\nu(w)\\
\intertext{and}
\mathcal{S}_{a,b,c} f(z) ~:=~& (1-|z|^2)^{a} \int\limits_{\ball} \frac { (1-|w|^2)^{b}} { |1-\langle z,w\rangle|^{c}} f(w) d\nu(w),
\end{align*}
where $d\nu$ is the volume measure on $\ball$, normalized so that $\nu(\ball)=1$.
Also, for any real parameter $\alpha$ we define $d\nu_{\alpha}(z):= (1-|z|^2)^{\alpha} d\nu(z)$.

Kures and Zhu \cite{KZ06} obtained the following two theorems.

\begin{thma}
Suppose $1< p < \infty$. Then the following conditions are equivalent:
\begin{enumerate}
\item[(i)]
The operator $\mathcal{T}_{a,b,c}$ is bounded on $L^p(\ball, d\nu_{\alpha})$.
\item[(ii)]
The operator $\mathcal{S}_{a,b,c}$ is bounded on $L^p(\ball, d\nu_{\alpha})$.
\item[(iii)]
The parameters satisfy
\[
\begin{cases} -pa<\alpha+1<p(b+1) \\
c\leq n+1+a+b.
\end{cases}
\]
\end{enumerate}
\end{thma}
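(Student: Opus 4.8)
The plan is to establish the cycle (ii)$\Rightarrow$(i)$\Rightarrow$(iii)$\Rightarrow$(ii). The implication (ii)$\Rightarrow$(i) is immediate: since $\bigl|(1-\langle z,w\rangle)^{-c}\bigr|=|1-\langle z,w\rangle|^{-c}$, we have $|\mathcal{T}_{a,b,c}f(z)|\le \mathcal{S}_{a,b,c}|f|(z)$ pointwise, so boundedness of $\mathcal{S}_{a,b,c}$ forces that of $\mathcal{T}_{a,b,c}$. For the other two implications the workhorse is the Forelli--Rudin estimate
\[
\int_{\ball}\frac{(1-|w|^2)^{t}}{|1-\langle z,w\rangle|^{n+1+t+\sigma}}\,d\nu(w)\ \asymp\
\begin{cases}
1, & \sigma<0,\\
\log\dfrac{1}{1-|z|^2}, & \sigma=0,\\
(1-|z|^2)^{-\sigma}, & \sigma>0,
\end{cases}
\qquad(t>-1),
\]
valid as $|z|\to1^-$, together with its holomorphic companion: for $t>-1$ and $\sigma>0$ the (a priori complex) integral $\int_{\ball}(1-|w|^2)^{t}(1-\langle z,w\rangle)^{-(n+1+t+\sigma)}\,d\nu(w)$ is actually a positive function of $z$ comparable to $(1-|z|^2)^{-\sigma}$, as one sees from its explicit hypergeometric evaluation.

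For (iii)$\Rightarrow$(ii) I would run Schur's test on $L^p(\ball,d\nu_\alpha)$ with the weight $h(z)=(1-|z|^2)^{-t}$. Writing the kernel of $\mathcal{S}_{a,b,c}$ relative to $d\nu_\alpha$ and inserting $h$, both Schur integrals are of the type above, and evaluating them reduces the two Schur inequalities to two requirements. First, integrability of the inner integrals forces the exponents $b-tp'$ and $a+\alpha-tp$ to exceed $-1$, i.e.\ $t<\min\{(b+1)/p',\,(a+\alpha+1)/p\}$ (here $1/p+1/p'=1$). Second, the resulting power of $(1-|z|^2)$ must be absorbed by $h(z)^{p'}$, resp.\ $h(w)^{p}$: in the decaying regime $\sigma>0$ of the estimate this is exactly $c\le n+1+a+b$, whereas in the bounded regime $\sigma<0$ it turns into the lower bound $t\ge\max\{-a/p',\,(\alpha-b)/p\}$ (the logarithmic case $\sigma=0$ being sidestepped by keeping $t$ off two exceptional values). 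A short calculation shows that the hypotheses $-pa<\alpha+1$ and $\alpha+1<p(b+1)$, together with their consequence $a+b+1>0$, are precisely what guarantee $\max\{-a/p',(\alpha-b)/p\}<\min\{(b+1)/p',(a+\alpha+1)/p\}$; thus the admissible $t$'s fill a nonempty open interval, and any interior point of it makes Schur's test succeed.

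For (i)$\Rightarrow$(iii) I would use test functions and duality. If $\alpha+1\ge p(b+1)$, take $f(w)=(1-|w|^2)^{-(\alpha+1)/p}\bigl(\log\tfrac{e}{1-|w|^2}\bigr)^{-1}$ restricted to a boundary shell $\{1-|w|^2<\varepsilon\}$; then $f\in L^p(\ball,d\nu_\alpha)$ because $p>1$, yet since $|1-\langle z,w\rangle|\asymp1$ for $|z|\le\tfrac12$ and $w\in\ball$, the defining integral for $\mathcal{T}_{a,b,c}f$ diverges absolutely on $\{|z|\le\tfrac12\}$, so $\mathcal{T}_{a,b,c}f$ is not even well defined --- contradicting boundedness; hence $\alpha+1<p(b+1)$. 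Next, conjugating by the isometry $f\mapsto(1-|z|^2)^{\alpha/p}f$ of $L^p(d\nu_\alpha)$ onto $L^p(d\nu)$ turns $\mathcal{T}_{a,b,c}$ into $\mathcal{T}_{a+\alpha/p,\,b-\alpha/p,\,c}$ on $L^p(d\nu)$, whose $L^p(d\nu)$-adjoint is $\mathcal{T}_{b-\alpha/p,\,a+\alpha/p,\,c}$ on $L^{p'}(d\nu)$; applying the inequality just established to this last operator (exponent $p'$, weight $0$) gives $a+\alpha/p>-1/p$, i.e.\ $-pa<\alpha+1$. Finally, for the range bound, suppose toward a contradiction that $c>n+1+a+b$ and test on $f_s(w)=(1-|w|^2)^{s}$ with $s$ just above $-(\alpha+1)/p$: then $f_s\in L^p(d\nu_\alpha)$, the two inequalities already secured let us keep $c>n+1+b+s$, and the holomorphic Forelli--Rudin estimate yields $\mathcal{T}_{a,b,c}f_s(z)\asymp(1-|z|^2)^{a+n+1+b+s-c}$; demanding $\mathcal{T}_{a,b,c}f_s\in L^p(d\nu_\alpha)$ for all admissible $s$ and letting $s\downarrow-(\alpha+1)/p$ forces $c\le n+1+a+b$, a contradiction.

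The step I expect to be the real obstacle is the bookkeeping inside (iii)$\Rightarrow$(ii): checking that the admissible interval of Schur exponents $t$ is genuinely nonempty for \emph{every} parameter triple obeying (iii). This forces one to play the decaying and bounded cases of the Forelli--Rudin estimate against each other --- the bounded case is what rescues the argument exactly when $c$ lies strictly between $n$ and $n+1+a+b$ --- and to verify a cluster of elementary but interlocking inequalities among $p$, $a$, $b$, $c$, $\alpha$. On the necessity side, the matching technical nuisance is the positivity and precise size of the holomorphic integral $\int_{\ball}(1-|w|^2)^{b+s}(1-\langle z,w\rangle)^{-c}\,d\nu(w)$.
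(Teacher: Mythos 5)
First, a caveat on the comparison: the paper does not prove Theorem A at all --- it is quoted from Kures--Zhu \cite{KZ06} --- so your argument can only be measured against the paper's proof of the analogous Theorem \ref{thm:main} and against the remarks the authors make about the method of \cite{KZ06}. The implications (ii)$\Rightarrow$(i) and (iii)$\Rightarrow$(ii) in your proposal are sound (your bookkeeping for the Schur exponent $t$ checks out: the four inequalities you list are exactly equivalent to $-pa<\alpha+1<p(b+1)$ together with their consequence $a+b+1>0$), and so are the first two necessary conditions in (i)$\Rightarrow$(iii).

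The genuine gap is in your derivation of $c\le n+1+a+b$: the ``holomorphic companion'' of the Forelli--Rudin estimate that you invoke is false. For $t>-1$ the integral $\int_{\ball}(1-|w|^2)^{t}(1-\langle z,w\rangle)^{-c}\,d\nu(w)$ converges absolutely for every $z\in\ball$ (since $|1-\langle z,w\rangle|\ge 1-|z|>0$); expanding $(1-\langle z,w\rangle)^{-c}=\sum_{k}\frac{(c)_k}{k!}\langle z,w\rangle^{k}$ and noting that for $k\ge1$ each $\langle z,w\rangle^{k}$ is the conjugate of a holomorphic function of $w$ vanishing at the origin, hence integrates to zero against any radial weight, one finds that the integral equals the \emph{constant} $\int_{\ball}(1-|w|^2)^{t}\,d\nu(w)$, independent of $z$ and of $c$. (This is just the reproducing property applied to $f\equiv1$.) Consequently your radial test functions $f_s(w)=(1-|w|^2)^{s}$ give $\mathcal{T}_{a,b,c}f_s(z)=C\,(1-|z|^2)^{a}$, which lies in $L^p(\ball,d\nu_\alpha)$ as soon as $pa+\alpha>-1$ and carries no information about $c$ whatsoever: the oscillation of the holomorphic kernel annihilates every radial test function. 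This is precisely why Kures and Zhu use the non-radial polynomials $w_1^{N}$ with $N\to\infty$ (only the $k=N$ term of the expansion survives, producing a factor $(c)_N$ whose growth in $N$ detects $c$), and why the present paper stresses that on the Siegel domain, where polynomials are not in $L^p$, one must instead test against $\bfrho(z)^{t}/\bfrho(z,w)^{s}$, whose non-radial denominator plays the same role. Your scheme would prove $c\le n+1+a+b$ if the hypothesis were boundedness of $\mathcal{S}_{a,b,c}$ (its kernel carries the absolute value, so the true Forelli--Rudin asymptotics apply), but in the cycle (ii)$\Rightarrow$(i)$\Rightarrow$(iii) this condition must be extracted from boundedness of $\mathcal{T}_{a,b,c}$ alone, and there the argument as written collapses.
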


\begin{thmb}
The following conditions are equivalent:
\begin{enumerate}
\item[(i)]
The operator $\mathcal{T}_{a,b,c}$ is bounded on $L^1(\ball, d\nu_{\alpha})$.
\item[(ii)]
The operator $\mathcal{S}_{a,b,c}$ is bounded on $L^1(\ball, d\nu_{\alpha})$.
\item[(iii)]
The parameters satisfy
\[
\begin{cases} -a<\alpha+1< b+1  \\
c = n+1+a+b.
\end{cases}
\quad \text{ or } \quad
\begin{cases} -a<\alpha+1 \leq b+1 \\
c < n+1+a+b.
\end{cases}
\]
\end{enumerate}
\end{thmb}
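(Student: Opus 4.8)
The plan is to prove the cycle (iii) $\Rightarrow$ (ii) $\Rightarrow$ (i) $\Rightarrow$ (iii). The middle implication is immediate: $|\mathcal{T}_{a,b,c}f(z)|\le\mathcal{S}_{a,b,c}(|f|)(z)$ for all $z$, so boundedness of $\mathcal{S}_{a,b,c}$ on $L^1(\ball,d\nu_\alpha)$ forces boundedness of $\mathcal{T}_{a,b,c}$ there.

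For (iii) $\Rightarrow$ (ii) I would use that $\mathcal{S}_{a,b,c}$ has nonnegative kernel: by Tonelli, for $f\ge0$,
\[
\|\mathcal{S}_{a,b,c}f\|_{L^1(d\nu_\alpha)}=\int_{\ball}f(w)(1-|w|^2)^\alpha\,\Phi(w)\,d\nu(w),\qquad \Phi(w):=(1-|w|^2)^{b-\alpha}\int_{\ball}\frac{(1-|z|^2)^{a+\alpha}}{|1-\langle z,w\rangle|^{c}}\,d\nu(z),
\]
so $\mathcal{S}_{a,b,c}$ is bounded on $L^1(\ball,d\nu_\alpha)$ if and only if $\sup_{w\in\ball}\Phi(w)<\infty$. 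Since (iii) forces $a+\alpha>-1$, the Forelli--Rudin estimate applies to the inner integral; writing $s:=c-n-1-a-\alpha$ and treating the cases $s<0$, $s=0$, $s>0$ separately (the inner integral being comparable to $1$, to $\log\frac1{1-|w|^2}$, and to $(1-|w|^2)^{-s}$ respectively), one checks that $\sup_w\Phi(w)<\infty$ is precisely the disjunction in (iii).

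The substantive step is (i) $\Rightarrow$ (iii), obtained by testing $\mathcal{T}_{a,b,c}$ on well-chosen functions. The key elementary observation is that for every radial $g\in L^1(\ball,d\nu)$ one has $\int_{\ball}g(w)(1-\langle z,w\rangle)^{-c}\,d\nu(w)=\int_{\ball}g\,d\nu$ for all $z\in\ball$ (expand the kernel in powers of $\langle z,w\rangle$ and integrate term by term; every term of positive degree vanishes by invariance of $g\,d\nu$ under $w\mapsto e^{i\theta}w$). Hence $\mathcal{T}_{a,b,c}f(z)=\bigl(\int_{\ball}f\,d\nu_b\bigr)(1-|z|^2)^a$ for radial $f\in L^1(\ball,d\nu_b)$. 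Testing on $f=\mathbf{1}_{\{|w|<1/2\}}$ shows $\|\mathcal{T}_{a,b,c}f\|_{L^1(d\nu_\alpha)}$ is a positive multiple of $\int_{\ball}(1-|z|^2)^{a+\alpha}d\nu(z)$, finite only if $-a<\alpha+1$. Testing on $f_R=\mathbf{1}_{\{|w|<R\}}(1-|w|^2)^{-b-1}$ as $R\uparrow1$: each $f_R$ is bounded with compact support, $\int_{\ball}f_R\,d\nu_b\asymp\log\frac1{1-R^2}\to\infty$, while $\|f_R\|_{L^1(d\nu_\alpha)}$ stays bounded exactly when $\alpha>b$; so if $\alpha>b$ the ratio $\|\mathcal{T}_{a,b,c}f_R\|_{L^1(d\nu_\alpha)}/\|f_R\|_{L^1(d\nu_\alpha)}$ blows up, contradicting (i). Thus $\alpha+1\le b+1$.

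Radial functions cannot detect $c$ (it cancels above), so for $c\le n+1+a+b$ I would test on the indicator $f_\delta$ of a thin, boundary-truncated non-isotropic box $E_\delta$ of size $\delta$ at a fixed $\zeta\in\sphere$ — so that $1-|w|^2\asymp\delta$ and $\nu(E_\delta)\asymp\delta^{n+1}$ on $E_\delta$ — chosen narrow enough that $(1-\langle z,w\rangle)^{-c}$ has argument in $(-\tfrac\pi4,\tfrac\pi4)$ for all $z,w\in E_\delta$; then $\mathrm{Re}\,(1-\langle z,w\rangle)^{-c}\asymp|1-\langle z,w\rangle|^{-c}\asymp\delta^{-c}$, giving $|\mathcal{T}_{a,b,c}f_\delta(z)|\gtrsim\delta^{\,n+1+a+b-c}$ on $E_\delta$ and hence $\|\mathcal{T}_{a,b,c}f_\delta\|_{L^1(d\nu_\alpha)}\gtrsim\delta^{\,n+1+a+b-c}\|f_\delta\|_{L^1(d\nu_\alpha)}$; letting $\delta\to0$ forces $c\le n+1+a+b$. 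Finally, to exclude the borderline case $\alpha=b$, $c=n+1+a+b$ — where both radial and single-box tests fail — I would dualize: boundedness of $\mathcal{T}_{a,b,c}$ on $L^1(\ball,d\nu_b)$ is equivalent to boundedness on $L^\infty(\ball)$ of its adjoint relative to the $d\nu_b$-pairing, and a Fubini computation identifies this adjoint, precisely when $\alpha=b$ and $c=n+1+a+b$, with a nonzero constant multiple of the weighted Bergman projection $P_{a+b}$ (well defined since $-a<\alpha+1$ gives $a+b>-1$). As $P_{a+b}$ is not bounded on $L^\infty$, this is impossible, so $c=n+1+a+b$ forces $\alpha+1<b+1$; combining the facts just proved yields (iii). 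The principal obstacle is this last borderline exclusion, where the Schur-type test breaks down and one must invoke the (oscillation-robust) unboundedness of the Bergman projection on $L^\infty$; a secondary technical point is controlling the argument of the oscillatory kernel $(1-\langle z,w\rangle)^{-c}$ in the Carleson-box estimate.
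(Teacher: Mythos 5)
Your proposal is essentially correct, but be aware that the paper contains no proof of Theorem B: it is quoted from Kures and Zhu \cite{KZ06} as background (the paper only proves its Siegel upper half-space analogue, Theorem \ref{thm:main}), so there is no internal argument to measure yours against. On its own terms your cycle (iii)$\Rightarrow$(ii)$\Rightarrow$(i)$\Rightarrow$(iii) holds up. The sufficiency step via Tonelli and the Forelli--Rudin estimates is the standard one. The necessity step is where you genuinely diverge from the literature: \cite{KZ06} tests on polynomials (the paper remarks on this explicitly when explaining why that device fails on an unbounded domain), whereas you use radial test functions via the mean-value identity $\int_\ball g(w)(1-\langle z,w\rangle)^{-c}d\nu(w)=\int_\ball g\,d\nu$, a truncated non-isotropic box to force $c\le n+1+a+b$, and duality for the borderline case $\alpha=b$, $c=n+1+a+b$. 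That last step is, in spirit, exactly how the paper handles the $p=1$ case of Theorem \ref{thm:main}: it passes to the adjoint \eqref{eqn:adjoint} acting on $L^\infty$ and tests against the unimodular functions $\bfrho(z,w)^c/|\bfrho(z,w)|^c$; applying the analogous test $g_w(z)=\overline{(1-\langle z,w\rangle)^{c}}/|1-\langle z,w\rangle|^{c}$ to your adjoint yields $\mathcal{T}^{\ast}g_w(w)\asymp\log\frac{1}{1-|w|^2}$ directly and spares you the appeal to the general unboundedness of $P_{a+b}$ on $L^\infty$. Two points need care in a full write-up: the box $E_\delta$ must have tangential radius $\epsilon\sqrt{\delta}$ with $\epsilon$ depending on $c$, so that $\mathrm{Re}(1-\langle z,w\rangle)\gtrsim\delta$ dominates $|\mathrm{Im}\langle z,w\rangle|\lesssim\epsilon^{2}\delta$ and hence $|c\arg(1-\langle z,w\rangle)|<\pi/4$ throughout $E_\delta\times E_\delta$; and the Fubini identification of the adjoint at the critical exponent converges absolutely only for $f$ of compact support, so $\mathcal{T}^{\ast}g$ must be identified through that dense subclass of $L^1(\ball,d\nu_b)$. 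Neither is a real obstacle.
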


Actually, these two theorems were proved in \cite{KZ06} under the additional assumption that $c$ is neither $0$
nor a negative integer. Recently, Zhao \cite{Zha15} removed this extra requirement as well as generalized these two
theorems by characterizing the boundedness of $\mathcal{T}_{a,b,c}$ and $\mathcal{S}_{a,b,c}$, from
$L^p(\ball, d\nu_{\alpha})$ to $L^q(\ball, d\nu_{\beta})$.

The case $c=n+1+a+b$ of Theorems A is well known and being extensively used, see for example \cite[Theorem 2.10]{Zhu05}.
It is also worthy to mention that, recently, a variant of Theorem A played a crucial role in the proof of the corona theorem for the Drury-Arveson
Hardy space, see \cite[Lemma 24]{CSW11}.

In this note we consider the counterparts of Theorems A and B for two classes of integral operators over
the Siegel upper half-space. The situation turns out to be quite different in this setting.

Before stating our main result, we introduce some definitions and notation.

We fix a positive integer $n$ throughout this paper and let $\bbC^n = \bbC\times \cdots \times \bbC$
denote the $n$-dimensional complex Euclidean space. For any two points $z=(z_1,\cdots,z_n)$
and $w=(w_1,\cdots,w_n)$ in $\bbC^n$, we write
\[
\langle z,w\rangle := z_1\bar{w}_1 + \cdots + z_n\bar{w}_n
\]
and $|z|:=\sqrt{\langle z,z\rangle}$. The open unit ball in $\bbC^n$ is the set
\[
\ball := \{ z\in \bbC^n: |z|<1\}.
\]
For $z\in \bbC^n$, we also use the notation
\[
z=(z^{\prime},z_{n}), \quad \text{where } z^{\prime}=(z_1,\ldots,z_{n-1})\in \bbC^{n-1} \text{ and } z_{n}\in \bbC^1.
\]
The Siegel upper half-space in $\bbC^n$ is the set
\[
\calU := \left\{ z\in \bbC^n: \ImPt z_{n} > |z^{\prime}|^2 \right\}.
\]
It is biholomorphically equivalent to the unit ball $\ball$ in
$\bbC^n$, via the Cayley transform $\Phi:\bbB \to \calU$ given by
\[
(z^{\prime}, z_{n})\; \longmapsto\; \left( \frac {z^{\prime}}{1+z_{n}},
i\frac {1-z_{n}}{1+z_{n}} \right),
\]
and so it is also referred to as the unbounded realization of the unit ball in $\bbC^n$.

We denote by $dV$ the Lebesgue measure on $\bbC^{n}$. For any real parameters $a$, $b$, and $c$, we consider two integral operators as follows.
\[
T_{a,b,c} f(z) := \bfrho(z)^{a} \int\limits_{\calU} \frac { \bfrho (w)^{b}} { \bfrho(z,w)^{c}} f(w) dV(w)
\]
and
\[
S_{a,b,c} f(z) := \bfrho(z)^{a} \int\limits_{\calU} \frac { \bfrho (w)^{b}} {|\bfrho(z,w)|^{c}} f(w) dV(w),
\]
where
\begin{equation*}
\bfrho(z,w) ~:=~ \frac {i}{2} (\bar{w}_{n}-z_{n})
- \langle z^{\prime}, w^{\prime} \rangle.
\end{equation*}
and $\bfrho(z):=\bfrho(z,z)= \ImPt z_n - |z^{\prime}|^2$.
These operators are modelled on the weighted Bergman projections on $\calU$.
Recall that the Bergman projection $P$ on $\calU$ is given by
\begin{equation*}
P f(z) =  \frac {n!}{4\pi^{n}}\, \int\limits_{\calU} \frac {f(w)} {\bfrho(z,w)^{n+1}} dV(w)
=  \frac {n!}{4\pi^{n}}\, T_{0,0,n+1} f(z), \quad z\in \calU.
\end{equation*}
See, for instance, \cite[Proposition 5.1]{Gin64}.
%

For real parameter $\alpha$, we define
\[
dV_{\alpha}(z):=\bfrho(z)^{\alpha} dV(z).
\]
As usual, for $p>0$, the space $L^p(\calU, dV_{\alpha})$ consists of all Lebesgue measurable
functions $f$ on $\calU$ for which
\begin{equation*}
\|f\|_{p,\alpha}:=\bigg\{\int\limits_{\calU} |f(z)|^p dV_{\alpha}(z)\bigg\}^{1/p}
\end{equation*}
is finite.

Our main result gives necessary and sufficient conditions for the
boundedness of the operators $S_{a,b,c}$ and $T_{a,b,c}$ on $L^p(\calU, dV_{\alpha})$ in terms of
parameters $a, b, c$, and $\alpha$.


\begin{thm}\label{thm:main}
Suppose $\alpha\in \bbR$ and $1\leq p \leq \infty$. Then the following conditions are equivalent:
\begin{enumerate}
\item[(i)]
The operator $T=T_{a,b,c}$ is bounded on $L^p(\calU, dV_{\alpha})$.
\item[(ii)]
The operator $S=S_{a,b,c}$ is bounded on $L^p(\calU, dV_{\alpha})$.
\item[(iii)]
The parameters satisfy the conditions
\begin{equation}\label{eqn:condns}
\begin{cases}
-pa<\alpha+1<p(b+1), \\
c=n+1+a+b.
\end{cases}
\end{equation}
When $p=\infty$, these conditions should be interpreted as
\begin{equation}\label{eqn:condns4infty}
\begin{cases}
a>0, \quad b>-1,\\
c=n+1+a+b.
\end{cases}
\end{equation}
\end{enumerate}
\end{thm}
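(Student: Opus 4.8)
The plan is to establish the cycle $(iii)\Rightarrow(ii)\Rightarrow(i)\Rightarrow(iii)$. The step $(ii)\Rightarrow(i)$ is free: since $c\in\bbR$ and $\RePt\bfrho(z,w)>0$ on $\calU\times\calU$, the principal power $\bfrho(z,w)^{-c}$ is well defined and $|\bfrho(z,w)^{-c}|=|\bfrho(z,w)|^{-c}$, so $|Tf(z)|\le S|f|(z)$ pointwise and $\|T\|\le\|S\|$ on every $L^{p}(\calU,dV_{\alpha})$. Everything else rests on one Forelli--Rudin type estimate on $\calU$: \emph{for real $s,t$, the integral $I_{s,t}(z):=\int_{\calU}\bfrho(w)^{s}\,|\bfrho(z,w)|^{-t}\,dV(w)$ is finite for one, hence every, $z\in\calU$ if and only if $s>-1$ and $t>n+1+s$, and then $I_{s,t}(z)=C_{s,t}\,\bfrho(z)^{\,n+1+s-t}$ with $C_{s,t}>0$}. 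To prove this I would use that the non-isotropic dilations $\delta_{r}\colon(z',z_{n})\mapsto(rz',r^{2}z_{n})$ satisfy $\bfrho(\delta_{r}z)=r^{2}\bfrho(z)$, $\bfrho(\delta_{r}z,\delta_{r}w)=r^{2}\bfrho(z,w)$, $dV\circ\delta_{r}=r^{2n+2}dV$, while the Heisenberg translations fixing $\calU$ leave $\bfrho(z)$, $\bfrho(z,w)$ and $dV$ invariant; since these maps act transitively on $\calU$, finiteness at one point is equivalent to finiteness everywhere and the quotient $I_{s,t}(z)/\bfrho(z)^{n+1+s-t}$ is forced to be constant, and both this constant and the convergence range are found by evaluating at $z=(0,i)$, where $\bfrho((0,i),w)$ is an explicit affine function of the real variables $w'$, $\RePt w_{n}$, $\bfrho(w)$ and the integral separates into elementary one-dimensional ones.

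For $(iii)\Rightarrow(ii)$ with $1<p<\infty$ I would apply Schur's test after conjugating out the weight: $S$ is bounded on $L^{p}(\calU,dV_{\alpha})$ if and only if the kernel $\bfrho(z)^{a+\alpha/p}\bfrho(w)^{b-\alpha/p}|\bfrho(z,w)|^{-c}$ defines a bounded operator on the unweighted $L^{p}(\calU,dV)$. Testing this kernel against $h(z)=\bfrho(z)^{-\lambda}$ and applying the estimate above to each of the two Schur integrals, one finds that the hypothesis $c=n+1+a+b$ makes every surviving power of $\bfrho$ cancel, so that the two Schur inequalities hold exactly when $\lambda$ obeys four linear constraints; an elementary computation using $p+p'=pp'$ shows such a $\lambda$ exists precisely when $-pa<\alpha+1<p(b+1)$. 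The endpoints are read off directly from the estimate: for $p=\infty$, $\|T\|=\|S\|=\sup_{z}\bfrho(z)^{a}I_{b,c}(z)$, which is finite iff $b>-1$, $c>n+1+b$ and $a+n+1+b-c=0$, i.e.\ iff \eqref{eqn:condns4infty}; for $p=1$, Fubini gives $\|T\|=\|S\|=\operatorname*{ess\,sup}_{w}\bfrho(w)^{b-\alpha}I_{a+\alpha,c}(w)$, finite iff $a+\alpha>-1$, $c>n+1+a+\alpha$ and $n+1+a+b-c=0$, i.e.\ iff $c=n+1+a+b$ and $-a<\alpha+1<b+1$.

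For $(i)\Rightarrow(iii)$ I would obtain the equality and the inequalities separately. The equality $c=n+1+a+b$ follows from dilation invariance: with $U_{r}f=f\circ\delta_{r}$ one has $\|U_{r}f\|_{p,\alpha}=r^{-(2\alpha+2n+2)/p}\|f\|_{p,\alpha}$ for every $f$, so each $U_{r}$ is a scalar times an isometric automorphism of $L^{p}(\calU,dV_{\alpha})$, and a change of variables gives $U_{r}TU_{r}^{-1}=r^{\,2(n+1+a+b-c)}T$; thus $r^{\,2(n+1+a+b-c)}\|T\|=\|U_{r}TU_{r}^{-1}\|=\|T\|$ for all $r>0$, and since the kernel of $T$ is nowhere vanishing $\|T\|\neq0$, forcing $c=n+1+a+b$. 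With this in hand the two strict inequalities come from testing $T$ against truncated powers $\bfrho^{-\lambda}\chi_{\Omega}$: choosing $\Omega$ so that $z$ and $w$ stay comparable keeps $\arg\bfrho(z,w)$ small, so that $\RePt Tf\gtrsim S|f|$ on $\Omega$ and no oscillatory cancellation is available; comparing $\|Tf\|_{p,\alpha}$ with $\|f\|_{p,\alpha}$ then forces $-pa<\alpha+1$ from the boundary singularity of the factor $\bfrho(z)^{a}$ and $\alpha+1<p(b+1)$ from the behaviour of the defining integral near infinity (equivalently, the latter inequality is the former applied to the adjoint $T^{*}=T_{b-\alpha,\,a+\alpha,\,c}$, bounded on $L^{p'}(\calU,dV_{\alpha})$). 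For $p\in\{1,\infty\}$ the necessity is already contained in the norm identities of the previous paragraph.

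I expect the principal difficulty to lie in the Forelli--Rudin estimate — not its finiteness range, which is routine, but the fact that $I_{s,t}$ is \emph{exactly}, not merely up to a constant, a power of $\bfrho(z)$; this exactness is precisely what replaces the inequality $c\le n+1+a+b$ of the ball case by the equality $c=n+1+a+b$ in the Siegel setting. The second delicate point is the necessity of the two inequalities for $1<p<\infty$: because $dV_{\alpha}$ is not comparable to any fixed measure near a boundary point, one cannot simply insert normalized bump functions as on the ball; the test functions must be arranged to probe both the boundary and the point at infinity of $\calU$ while remaining in a region where the kernel of $T$ does not oscillate, so that boundedness of $T$ genuinely forces that of $S$.
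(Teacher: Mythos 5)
Your proposal is correct in all essentials, but it reaches the crucial necessity $c=n+1+a+b$ by a genuinely different route than the paper. The paper tests $T$ against the explicit family $f_{\beta}(z)=\bfrho(z)^{t}/\bfrho(z,\beta\bfi)^{s}$ and must therefore evaluate $Tf_{\beta}$ in closed form; this is exactly what its two-kernel Key Lemma, $\int_{\calU}\bfrho(w)^{t}\bfrho(z,w)^{-r}\bfrho(w,u)^{-s}\,dV(w)=C_{1}\bfrho(z,u)^{n+1+t-r-s}$ (proved via the Cayley transform and a M\"obius change of variables on the ball), is for; comparing $\|Tf_{\beta}\|_{p,\alpha}$ with $\|f_{\beta}\|_{p,\alpha}$ as $\beta$ varies is a dilation argument in disguise. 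Your conjugation identity $U_{r}TU_{r}^{-1}=r^{2(n+1+a+b-c)}T$ with $U_{r}$ a scalar multiple of an isometry extracts the same conclusion abstractly, without the Key Lemma, without the preliminary step $c>0$ that the paper needs before it may apply that lemma, and without any closed-form evaluation; the only integral identity you ever use is the single-kernel Forelli--Rudin estimate (the paper's Lemma \ref{lem:keylemma}), and your symmetry proof of it (transitivity of Heisenberg translations plus dilations forces the power law; the range and constant come from evaluating at $(0',i)$) is a sound substitute for the paper's derivation. Your treatment of the endpoints (unimodular test function $\bfrho(z,\cdot)^{c}/|\bfrho(z,\cdot)|^{c}$ for $p=\infty$, duality for $p=1$) and of sufficiency (Schur's test with $g=\bfrho^{-\lambda}$; your four linear constraints on $\lambda$ do reduce, via $p+q=pq$, to $-pa<\alpha+1<p(b+1)$, and the paper's $\lambda=(1+\alpha)/(pq)$ is one admissible choice) coincide with the paper's. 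The one place where your sketch is thinner than the paper's argument is the necessity of $-pa<\alpha+1$ for $1<p<\infty$: the paper reads $pa+\alpha>-1$ off from the exact formula for $Tf_{\beta}$, whereas you must justify the no-cancellation claim for your truncated test functions. This does work --- for $f=\chi_{K}$ with $K$ a small ball about $w_{0}$, the inner integral $\int_{K}\bfrho(w)^{b}\bfrho(z,w)^{-c}\,dV(w)$ extends continuously and non-vanishingly to a neighbourhood of a boundary point (since $\RePt\bfrho(z,w)\geq\tfrac12\bfrho(w)$ keeps the kernel in a fixed compact subset of $\bbC\setminus\{0\}$), so $|Tf(z)|\gtrsim\bfrho(z)^{a}$ there and local integrability of $\bfrho^{pa+\alpha}$ forces the inequality --- but it should be written out; the companion inequality via the adjoint $T^{*}=T_{b-\alpha,a+\alpha,c}$ is exactly the paper's step. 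What your approach buys is economy (no explicit constants, no Cayley-transform computation); what the paper's buys is the explicit Key Lemma constant, which it advertises as of independent interest and which your argument never produces.
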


Note that Condition (iii) in Theorem \ref{thm:main} is different
from the corresponding ones in Theorems A and B. In particular, unlike $\mathcal{T}_{a,b,c}$
and $\mathcal{S}_{a,b,c}$, both $T_{a,b,c}$ and $S_{a,b,c}$ are unbounded whenever $c\neq n+1+a+b$.
This is due to the unboundedness of the Siegel upper half-space and the homogeneity of
the operators $T_{a,b,c}$ and $S_{a,b,c}$.

The proof follows the same main lines as in \cite{KZ06}. However, the computations here are more subtle.
For instance, in the proof of the necessity for the boundedness of $T_{a,b,c}$, we cannot simply choose
polynomials to serve as test functions as in \cite{KZ06}, since polynomials do not belong to
$L^p(\calU, dV_{\alpha})$. Instead, we consider the functions of the form $\bfrho(z)^t/\bfrho(z,w)^s$,
with appropriate choices of the parameters involved.  This leads to more complicated calculations than those
arising in the unit ball setting. Hence, an essential role is played by the following lemma,
which might be of independent interest.

\begin{keylem}\label{lem:keylem2}
Suppose that $r,\,s>0$, $t>-1$ and $r+s-t>n+1$. Then
\begin{equation}\label{eqn:keylem2}
\int\limits_{\calU}  \frac {\bfrho(w)^{t}} {\bfrho(z,w)^{r} \bfrho(w,u)^{s}} dV(w)
~=~ \frac {C_1(n,r,s,t)} {\bfrho(z,u)^{r+s-t-n-1}}
\end{equation}
holds for all $z, u\in \calU$, where
\begin{equation}\label{eqn:const}
C_1(n,r,s,t) ~:=~  \frac {4\pi^{n} \Gamma(1+t)\Gamma(r+s-t-n-1)}{\Gamma(r)
\Gamma(s)}.
\end{equation}
\end{keylem}

The formula \eqref{eqn:keylem2}, with implicit constant $C_1(n,r,s,t)$, is not new;  it is a special case of \cite[Lemma 2.2']{CR80}.
The novelty here is to find the explicit expression \eqref{eqn:const} of $C_1(n,r,s,t)$.

The rest of the paper is organized as follows: In Section 2 we recall some basic materials about M\"obius transformations
and the Cayley transform. Section 3 is devoted to the proof of Key Lemma. Our main result, Theorem \ref{thm:main}
will be proved in Sections 4. Finally, in Section 5, two examples are given to illustrate the use of Theorem \ref{thm:main}.

\section{Preliminaries}

We begin by recalling that the Cayley transform $\Phi:\bbB \to \calU$ is given by
\[
(z^{\prime}, z_{n})\; \longmapsto\; \left( \frac {z^{\prime}}{1+z_{n}},
i\left(\frac {1-z_{n}}{1+z_{n}}\right) \right).
\]
It is easy to check that the identity
\begin{equation}\label{eqn:identity14phi}
\bfrho(\Phi(\eta),\Phi(\xi)) = \frac {1-\langle \eta, \xi\rangle} {(1+\eta_{n}) (1+\overline{\xi}_{n})}
\end{equation}
holds for all $\eta,\xi\in \ball$, and the real Jacobian of $\Phi$ at $\xi\in \ball$ is
\begin{equation}\label{eqn:jacobian4phi}
\left(J_{R}\Phi\right)(\xi) = \frac {4}{|1+\xi_{n}|^{2(n+1)}}.
\end{equation}

The group of all one-to-one holomorphic mappings of $\ball$ onto $\ball$ (the so-called automorphisms
of $\ball$) will be denoted by $\mathrm{Aut}(\ball)$. It is generated by the unitary transformations
on $\bbC^{n}$ along with the M\"obius transformations
$\varphi_{\eta}$ given by
\[
\varphi_{\eta}(\xi) := \frac {\eta-P_{\eta}\xi-(1-|\eta|^2)^{\frac {1}{2}}Q_{\eta}\xi}
{1-\langle \xi, \eta\rangle},
\]
where $\eta\in \ball$, $P_{\eta}$ is the orthogonal projection onto the space spanned
by $\eta$,
and $Q_{\eta}\xi=\xi-P_{\eta}\xi$.

It is easily shown that the mapping $\varphi_{\eta}$ satisfies
\[
\varphi_{\eta}(0)=\eta, \quad \varphi_{\eta}(\eta)=0, \quad \varphi_{\eta}(\varphi_{\eta}(\xi))=\xi.
\]
Furthermore, for all $\xi, \zeta\in \ball$,
\begin{align}
1- \left\langle \varphi_{\eta}(\xi), \varphi_{\eta}(\zeta)\right\rangle ~=~& \frac {(1- |\eta|^2)
(1- \langle \xi, \zeta\rangle)} {(1-\langle \xi, \eta\rangle) (1- \langle\eta,\zeta\rangle)}, \label{eqn:moeb0}
\intertext{and in particular,}
1- \langle \varphi_{\eta}(\xi), \eta\rangle ~=~& \frac {1-|\eta|^2} {1- \langle\xi, \eta\rangle}.\label{eqn:moeb1}
\end{align}
Finally, an easy computation shows that
\begin{equation}\label{eqn:moeb2}
1- \langle \varphi_{\eta}(\xi), \zeta \rangle ~=~ \frac {(1- \langle \xi, \varphi_{\eta}(\zeta)\rangle)
(1- \langle \eta, \zeta\rangle)} {1- \langle \xi, \eta\rangle}
\end{equation}
holds for all $\xi,\eta\in \ball$.

The best general reference here is \cite[Chapter 2]{Rud80}.

\vskip8pt
%
%

The following lemma, usually called Schur's test, is one of the most commonly used
results for proving the $L^p$-boundedness of integral
operators. See, for example, \cite[Theorem 3.6]{Zhu07}.

\begin{lem}\label{lem:schurtest}
Suppose that $(X,\mu)$ is a $\sigma$-finite measure space and
$Q(x,y)$ is a nonnegative measurable function on $X\times X$ and $T$
is the associated integral operator
\[
Tf(x)=\int_X Q(x,y) f(y) d\mu(y).
\]
Let $1<p<\infty$ and $q=p/(p-1)$. If there exist a positive constant
$C$ and a positive measurable function $g$ on $X$ such that
\[
\int_X Q(x,y) g(y)^{q} d\mu(y) \leq  C g(x)^{q}
\]
for almost every $x$ in $X$ and
\[
\int_X Q(x,y) g(x)^{p} d\mu(x) \leq  C g(y)^{p}
\]
for almost every $y$ in $X$, then $T$ is bounded on $L^p(X,\mu)$ with
$\|T\|\leq C$.
\end{lem}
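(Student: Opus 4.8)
The plan is to derive the norm bound $\|Tf\|_p \le C\|f\|_p$ directly from the two hypotheses by a single, carefully arranged application of H\"older's inequality to a factored kernel, followed by Tonelli's theorem. Since $Q$ is nonnegative we have $|Tf(x)| \le \int_X Q(x,y)\,|f(y)|\,d\mu(y)$, and $\||f|\|_p=\|f\|_p$, so it suffices to treat $f\ge 0$ and to estimate $\int_X (Tf)^p\,d\mu$.

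First I would split the kernel against the weight $g$ so that each hypothesis can be brought to bear. Writing
\[
Q(x,y) f(y) = \bigl( Q(x,y)^{1/q}\, g(y)\bigr)\cdot \bigl( Q(x,y)^{1/p}\, g(y)^{-1} f(y)\bigr)
\]
and applying H\"older's inequality in $y$ with conjugate exponents $q$ and $p$ gives
\[
Tf(x) \le \Bigl( \int_X Q(x,y)\, g(y)^q\, d\mu(y)\Bigr)^{1/q}
\Bigl( \int_X Q(x,y)\, g(y)^{-p} f(y)^p\, d\mu(y)\Bigr)^{1/p}.
\]
The first factor is controlled by the first hypothesis by $\bigl(C g(x)^q\bigr)^{1/q} = C^{1/q} g(x)$, whence
\[
Tf(x)^p \le C^{p/q}\, g(x)^p \int_X Q(x,y)\, g(y)^{-p} f(y)^p\, d\mu(y).
\]

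Next I would integrate in $x$ and invoke Tonelli's theorem to interchange the order of integration, obtaining
\[
\int_X Tf(x)^p\, d\mu(x) \le C^{p/q} \int_X g(y)^{-p} f(y)^p
\Bigl( \int_X Q(x,y)\, g(x)^p\, d\mu(x)\Bigr) d\mu(y).
\]
The inner integral is now estimated by the second hypothesis by $C g(y)^p$, which cancels the factor $g(y)^{-p}$ and leaves
\[
\int_X Tf(x)^p\, d\mu(x) \le C^{p/q + 1} \int_X f(y)^p\, d\mu(y).
\]
Finally, the relation $q = p/(p-1)$ gives $p/q = p - 1$, hence $p/q + 1 = p$; taking $p$-th roots yields $\|Tf\|_p \le C\|f\|_p$, and therefore $\|T\|\le C$.

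The argument is short, so the points requiring care are organizational rather than deep. The factorization of $Q(x,y)f(y)$ must be chosen so that precisely the combinations $Q\,g^q$ and $Q\,g^p$ occurring in the two hypotheses are produced; this is the one genuinely clever step, and everything else follows mechanically. One must also track the exponent bookkeeping $p/q+1=p$ that lets the two constants $C$ combine into a single $C^p$. The only analytic subtlety is the interchange of integration: it is legitimate because $Q$, $g$, and $|f|$ are nonnegative and measurable and $(X,\mu)$ is $\sigma$-finite, so Tonelli applies without any prior integrability assumption, the finiteness of the relevant inner integrals being supplied by the hypotheses themselves.
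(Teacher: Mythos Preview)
Your proof is correct and is the standard argument for Schur's test. The paper itself does not supply a proof of this lemma; it simply cites \cite[Theorem 3.6]{Zhu07}, where the same H\"older-plus-Tonelli argument you give appears.
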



\section{The proof of Key Lemma}

We begin with two lemmas.

\begin{lem}\label{lem:crucial2}
Suppose that $r,\,s>0$, $t>-1$ and $r+s-t>n+1$. Then
\begin{align*} 
\int\limits_{\ball} & \frac {(1-|\xi|^2)^t dV(\xi)}
{(1-\langle\eta, \xi\rangle)^{s} (1-\langle\zeta, \xi\rangle)^{n+1+t-s}
(1-\langle\xi, \zeta\rangle)^{n+1+t-r}}
~=~ \frac {C_1(n,r,s,t)}{4 (1-\langle\eta, \zeta\rangle)^{n+1+t-r}}
\end{align*}
holds for any $\eta\in \ball$ and $\zeta\in \sphere$.
\end{lem}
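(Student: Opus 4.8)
The plan is to derive this integral identity on the ball as a consequence of the classical Forelli--Rudin type estimate together with a single application of a Möbius change of variables. Observe first that the requested formula is a "two-point" generalization, with one of the points $\zeta$ sitting on the sphere $\sphere$ rather than inside $\ball$. The right-hand side has a removable singularity at $\langle\eta,\zeta\rangle$, and for $\zeta\in\sphere$ the factor $(1-\langle\zeta,\xi\rangle)^{n+1+t-s}(1-\langle\xi,\zeta\rangle)^{n+1+t-r}$ is honest and nonvanishing on $\ball$, so the integrand is locally integrable provided $t>-1$; convergence near the sphere is exactly what the hypothesis $r+s-t>n+1$ buys us (the total order of the denominator forces decay). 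The idea is to reduce to the case $\eta=0$ by the Möbius automorphism $\varphi_{\eta}$, and then recognize the resulting one-point integral as a known beta-type evaluation.

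The key steps, in order, are as follows. \emph{Step 1.} Fix $\zeta\in\sphere$ and regard both sides as (anti-)holomorphic in $\eta$, or alternatively argue directly: apply the substitution $\xi=\varphi_{\eta}(\lambda)$. The real Jacobian of $\varphi_{\eta}$ is $\bigl((1-|\eta|^2)/|1-\langle\lambda,\eta\rangle|^2\bigr)^{n+1}$, and the three identities \eqref{eqn:moeb0}, \eqref{eqn:moeb1}, \eqref{eqn:moeb2} let us rewrite each factor $1-\langle\eta,\xi\rangle$, $1-\langle\zeta,\xi\rangle$, $1-\langle\xi,\zeta\rangle$, and $1-|\xi|^2$ in terms of $\lambda$ and the new "target point" $\varphi_{\eta}(\zeta)$. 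A bookkeeping of exponents — using that the total exponent on the denominator is $s+(n+1+t-s)+(n+1+t-r)=2(n+1)+2t-r$ — shows that all the $(1-\langle\lambda,\eta\rangle)$ and $(1-\langle\eta,\lambda\rangle)$ factors coming from the kernel combine with the Jacobian and the $(1-|\eta|^2)^t$ weight factor to cancel precisely, leaving an integral of the same shape with $\eta$ replaced by $0$ and $\zeta$ replaced by $\zeta^{*}:=\varphi_{\eta}(\zeta)\in\sphere$ (it remains on the sphere because $\varphi_{\eta}$ maps $\sphere$ to $\sphere$), together with an overall factor $(1-\langle\eta,\zeta\rangle)^{-(n+1+t-r)}$ matching the right-hand side. \emph{Step 2.} We are reduced to proving
\[
\int\limits_{\ball}\frac{(1-|\lambda|^2)^t\,dV(\lambda)}{(1-\langle\lambda,\zeta^{*}\rangle)^{n+1+t-r}(1-\langle\zeta^{*},\lambda\rangle)^{n+1+t-s}}
~=~\frac{C_1(n,r,s,t)}{4},
\]
a quantity independent of the boundary point $\zeta^{*}$. \emph{Step 3.} Evaluate this last integral by expanding both denominators in their Taylor series $\sum_k \frac{\Gamma(N+k)}{\Gamma(N)k!}\langle\lambda,\zeta^{*}\rangle^{k}$, integrating term by term against $(1-|\lambda|^2)^t\,dV(\lambda)$ in polar-type coordinates on the ball (using that $\int_{\ball}\langle\lambda,\zeta^{*}\rangle^{j}\overline{\langle\lambda,\zeta^{*}\rangle}^{k}(1-|\lambda|^2)^t\,dV=0$ unless $j=k$, and computing the diagonal terms via the standard $\int_0^1 \rho^{2k}(1-\rho^2)^t\rho^{2n-1}d\rho$ and the surface-area constant), and summing the resulting hypergeometric series by the Gauss evaluation $\,_2F_1(a,b;c;1)=\Gamma(c)\Gamma(c-a-b)/(\Gamma(c-a)\Gamma(c-b))$. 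The condition $r+s-t>n+1$ is exactly what makes $c-a-b>0$ so that Gauss's theorem applies, and a short Gamma-function simplification reproduces $C_1(n,r,s,t)/4$ with $C_1$ as in \eqref{eqn:const}.

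The main obstacle I anticipate is \emph{Step 1}: verifying that the exponents genuinely conspire to cancel. One must be careful that the substitution is applied with $\zeta$ \emph{fixed on the sphere}, so that $\varphi_{\eta}(\zeta)$ is well-defined and still on $\sphere$ (the formula for $\varphi_{\eta}$ extends continuously to $\overline{\ball}$), and one must track the split of the exponent $n+1+t-s$ versus $n+1+t-r$ correctly through \eqref{eqn:moeb0}–\eqref{eqn:moeb2} — a mismatch here is the easiest place to introduce an error. An alternative that sidesteps some of this algebra is to first prove the cleaner "both points interior" identity — namely \eqref{eqn:keylem2} rewritten on the ball via \eqref{eqn:identity14phi}–\eqref{eqn:jacobian4phi} — for $\zeta$ \emph{inside} $\ball$, and then obtain the boundary case by letting $\zeta\to\sphere$ and invoking dominated convergence, which is justified because the hypothesis $r+s-t>n+1$ provides an integrable majorant uniform in $\zeta$ near the sphere. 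Either route reduces everything to the one-variable Gauss summation in Step 3, which is entirely routine.
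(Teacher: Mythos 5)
There is a genuine gap in Step 1, and it is fatal to the proposed reduction. Your own exponent count already exposes it: the total exponent in the denominator is $s+(n+1+t-s)+(n+1+t-r)=2(n+1)+2t-r$, whereas the Jacobian of $\varphi_{\eta}$ together with the transformed weight $(1-|\xi|^2)^t$ contributes $|1-\langle\lambda,\eta\rangle|^{2(n+1)+2t}$. These do \emph{not} cancel; carrying the substitution through \eqref{eqn:moeb0}--\eqref{eqn:moeb2} leaves a residual factor $(1-\langle\lambda,\eta\rangle)^{-r}$ inside the integral. Concretely, the substitution turns the integral into an explicit prefactor times
\[
\int\limits_{\ball}\frac{(1-|\lambda|^2)^t\,dV(\lambda)}{(1-\langle\lambda,\eta\rangle)^{r}\,(1-\langle\varphi_{\eta}(\zeta),\lambda\rangle)^{n+1+t-s}\,(1-\langle\lambda,\varphi_{\eta}(\zeta)\rangle)^{n+1+t-r}},
\]
which still depends on $\eta$ in the integrand: you have not reduced to $\eta=0$. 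The failure is structural, not a bookkeeping slip. The kernel of this lemma is not M\"obius-balanced (its total degree falls short of $2(n+1+t)$ by exactly $r$), which is precisely why the answer retains the nontrivial factor $(1-\langle\eta,\zeta\rangle)^{-(n+1+t-r)}$. The M\"obius trick does work for the balanced four-factor kernel of \eqref{eqn:4factors1} --- and that is exactly where the paper uses it, reducing \eqref{eqn:4factors1} \emph{to} the present lemma, not the other way around.

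Your fallback route is also problematic. The ``interior'' version of this integral (replace $\zeta$ by $\varrho\zeta$, $\varrho<1$) does not admit a closed form: by \eqref{eqn:liu15} it equals an infinite series of hypergeometric functions, and the closed form only materializes in the limit $\varrho\to1$ via Gauss's theorem. So there is no cleaner interior identity to pass to the boundary from, and invoking \eqref{eqn:keylem2} here would be circular, since in the paper it is a consequence of this lemma (via \eqref{eqn:4factors1}). The paper's actual argument is: quote the series identity \eqref{eqn:liu15} from \cite{Liu15} for $\varrho<1$, let $\varrho\to1$ by dominated convergence under the temporary extra hypothesis $r+s>2(n+1+t)$ (removed afterwards by analytic continuation in the parameters), and sum with Gauss's formula. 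Your Step 3 --- the diagonal expansion of the two $\zeta$-factors and the Gauss evaluation, with $r+s-t>n+1$ guaranteeing $c-a-b>0$ --- is correct and is indeed the analytic heart of the matter; but as written you can only reach that two-factor integral after the invalid reduction of Step 1. With the $\eta$-factor still present you need the full triple expansion, which is exactly the content of \eqref{eqn:liu15}.
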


\begin{proof}

We may further assume that $r+s>2(n+1+t)$; if we prove the lemma in this special case, the general case
follows by analytic continuation.

According to \cite[Lemma 2.3]{Liu15}, the identity
\begin{align}\label{eqn:liu15}
\int\limits_{\ball} & \frac {(1-|\xi|^2)^t dV(\xi)}
{(1-\langle\eta, \xi\rangle)^{s} (1- \langle \varrho\zeta, \xi\rangle)^{n+1+t-s}
(1-\langle\xi, \varrho\zeta\rangle)^{n+1+t-r}} \notag\\
&\qquad ~=~ \frac {\pi^n \Gamma(1+t)} {\Gamma(n+1+t)} \sum_{j=0}^{\infty}
\frac {(s)_{j} (n+1+t-r)_{j}} {(n+1+t)_{j} j!} \notag \\
&\qquad \quad \times \hyperg {n+1+t-s}{n+1+t-r+j}{n+1+t+j}{\varrho^2}
(\varrho\langle\eta, \zeta\rangle)^j
\end{align}
holds for all $\varrho \in [0,1)$, $\eta\in \ball$ and $\zeta\in \sphere$.
Note that
\[
\big| \text{ the integrand in \eqref{eqn:liu15} } \big|
~\leq~ \frac {2^{r+s-2(n+1+t)} (1-|\xi|^2)^t} {|1- \langle\eta, \xi\rangle|^{s}},
\]
since $r+s>2(n+1+t)$. Letting $\varrho\to 1$, by the dominated convergence theorem and using the well-known formula
\[
\hyperg{a}{b}{c}{1} ~=~ \frac {\Gamma(c) \Gamma(c-a-b)}
{\Gamma(c-a) \Gamma(c-b)},\qquad \RePt(c-a-b)>0,
\]
we obtain
\begin{align*}
\int\limits_{\ball} & \frac {(1-|\xi|^2)^t dV(\xi)}
{(1-\langle\eta, \xi\rangle)^{s} (1-\langle\zeta, \xi\rangle)^{n+1+t-s}
(1-\langle\xi, \zeta\rangle)^{n+1+t-r}} \notag\\
&\qquad ~=~ \frac {\pi^n \Gamma(1+t)} {\Gamma(n+1+t)} \sum_{j=0}^{\infty}
\frac {(s)_{j} (n+1+t-r)_{j}} {(n+1+t)_{j} j!} \\
&\qquad \quad \times \hyperg {n+1+t-s}{n+1+t-r+j}{n+1+t+j}{1} \langle\eta, \zeta\rangle^j\\
&\qquad ~=~ \frac {\pi^{n} \Gamma(1+t)\Gamma(r+s-t-n-1)}{\Gamma(r)
\Gamma(s)} \sum_{j=0}^{\infty} \frac {(n+1+t-r)_{j}} {j!} \langle\eta, \zeta\rangle^j\\
&\qquad ~=~ \frac {C_1(n,r,s,t)}{4 (1-\langle\eta, \zeta\rangle)^{n+1+t-r}},
\end{align*}
as desired.
\end{proof}

\begin{lem}
Suppose that $r,\,s>0$, $t>-1$ and $r+s-t>n+1$. Then
\begin{align}\label{eqn:4factors1}
\int\limits_{\ball} & \frac {(1-|\omega|^2)^t dV(\omega)}
{(1-\langle\eta, \omega\rangle)^{r} (1-\langle\omega, \zeta\rangle)^{s}(1+\omega_{n})^{n+1+t-s}
(1+\overline{\omega}_{n})^{n+1+t-r}} \notag\\
& =~ \frac {C_1(n,r,s,t)}{4} (1+\eta_{n})^{s-n-1-t} (1+\overline{\zeta}_{n})^{r-n-1-t}
(1-\langle\eta, \zeta\rangle)^{n+1+t-r-s}
\end{align}
holds for all $\eta, \zeta\in \ball$.
\end{lem}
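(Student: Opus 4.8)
The plan is to reduce \eqref{eqn:4factors1} to Lemma \ref{lem:crucial2} by means of a Möbius change of variables. Put $e_n:=(0,\dots,0,1)$, so that $1+\omega_n=1-\langle\omega,-e_n\rangle$ and $1+\overline{\omega}_n=1-\langle -e_n,\omega\rangle$; thus the kernel in \eqref{eqn:4factors1} carries the two interior nodes $\eta,\zeta\in\ball$ and the single boundary node $-e_n\in\sphere$. I would substitute $\omega=\varphi_{\zeta}(\xi)$ and rewrite each factor of the transformed integrand using \eqref{eqn:moeb0}, \eqref{eqn:moeb1}, \eqref{eqn:moeb2} and the two formulas
\[
1-|\varphi_{\zeta}(\xi)|^2=\frac{(1-|\zeta|^2)(1-|\xi|^2)}{|1-\langle\xi,\zeta\rangle|^2},\qquad (J_R\varphi_{\zeta})(\xi)=\left(\frac{1-|\zeta|^2}{|1-\langle\xi,\zeta\rangle|^2}\right)^{n+1}
\]
(the first a special case of \eqref{eqn:moeb0}, the second standard; see \cite[Chapter 2]{Rud80}). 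Since $\varphi_{\zeta}$ is an automorphism of $\ball$ taking $\sphere$ onto $\sphere$ and $\varphi_{\zeta}(\zeta)=0$, the three nodes are carried to $\varphi_{\zeta}(\eta)\in\ball$, $0$, and $\varphi_{\zeta}(-e_n)\in\sphere$.

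The crucial point is that after this substitution all the factors $1-\langle\xi,\zeta\rangle$ and $1-\langle\zeta,\xi\rangle$ produced by the Möbius identities and by the Jacobian cancel out identically: the exponents of $1-\langle\xi,\zeta\rangle$ contributed by $(1-|\omega|^2)^{t}$, by $(J_R\varphi_{\zeta})(\xi)$, by $(1-\langle\omega,\zeta\rangle)^{-s}$ and by $(1+\omega_n)^{-(n+1+t-s)}$ are $-t$, $-(n+1)$, $+s$, $+(n+1+t-s)$, summing to $0$, and likewise the exponents of $1-\langle\zeta,\xi\rangle$ contributed by $(1-|\omega|^2)^{t}$, by $(J_R\varphi_{\zeta})(\xi)$, by $(1-\langle\eta,\omega\rangle)^{-r}$ and by $(1+\overline{\omega}_n)^{-(n+1+t-r)}$ are $-t$, $-(n+1)$, $+r$, $+(n+1+t-r)$, again summing to $0$; this is precisely where the specific exponents $n+1+t-s$, $n+1+t-r$ of \eqref{eqn:4factors1} enter. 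Writing $I$ for the left-hand side of \eqref{eqn:4factors1}, pulling the remaining $\zeta$-factors outside the integral leaves
\[
I=(1-|\zeta|^2)^{n+1+t-s}\,(1+\zeta_n)^{-(n+1+t-s)}(1+\overline{\zeta}_n)^{-(n+1+t-r)}(1-\langle\eta,\zeta\rangle)^{-r}\cdot J,
\]
where
\[
J=\int\limits_{\ball}\frac{(1-|\xi|^2)^{t}\,dV(\xi)}{(1-\langle\varphi_{\zeta}(\eta),\xi\rangle)^{r}\,(1-\langle\xi,\varphi_{\zeta}(-e_n)\rangle)^{n+1+t-s}\,(1-\langle\varphi_{\zeta}(-e_n),\xi\rangle)^{n+1+t-r}}.
\]
This $J$ is exactly the integral of Lemma \ref{lem:crucial2} with $r$ and $s$ interchanged (legitimate, as the hypotheses on $r,s$ and the constant $C_1$ are symmetric in $r,s$), applied with interior node $\varphi_{\zeta}(\eta)$ and boundary node $\varphi_{\zeta}(-e_n)$; hence $J=C_1(n,r,s,t)/\bigl(4\,(1-\langle\varphi_{\zeta}(\eta),\varphi_{\zeta}(-e_n)\rangle)^{n+1+t-s}\bigr)$. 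Finally I would invoke \eqref{eqn:moeb0} once more, in the form
\[
1-\langle\varphi_{\zeta}(\eta),\varphi_{\zeta}(-e_n)\rangle=\frac{(1-|\zeta|^2)(1+\eta_n)}{(1-\langle\eta,\zeta\rangle)(1+\zeta_n)},
\]
substitute into the expression for $I$, and collect exponents; the powers of $1-|\zeta|^2$ and of $1+\zeta_n$ cancel, and one is left with exactly the right-hand side of \eqref{eqn:4factors1}.

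The main obstacle is the bookkeeping: for each of the five factors of the integrand of \eqref{eqn:4factors1} and for the Jacobian $(J_R\varphi_{\zeta})(\xi)$ one must record precisely which powers of $1-|\zeta|^2$, $1-\langle\xi,\zeta\rangle$, $1-\langle\zeta,\xi\rangle$, $1+\zeta_n$, $1+\overline{\zeta}_n$ and $1-\langle\eta,\zeta\rangle$ it produces after the substitution, and then check the two cancellations above together with the final collection of exponents. Two routine matters also need attention: that every complex power is taken with the principal branch and the branch choices stay consistent throughout — ensured by $\RePt(1-\langle\eta,\omega\rangle)>0$ and $\RePt(1+\omega_n)>0$ for $\eta,\omega\in\ball$ and by $1\pm\zeta_n\ne0$, $1-\langle\eta,\zeta\rangle\ne0$ for $\zeta\in\ball$ — and that the change of variables is valid, i.e.\ that the integrand of \eqref{eqn:4factors1} lies in $L^1(\ball,dV)$ under the stated hypotheses, which follows from the absolute convergence contained in Lemma \ref{lem:crucial2} as applied to $J$.
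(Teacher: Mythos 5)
Your proposal is correct and follows essentially the same route as the paper: a M\"obius change of variables reducing \eqref{eqn:4factors1} to Lemma \ref{lem:crucial2} via \eqref{eqn:moeb0}--\eqref{eqn:moeb2}, the only (immaterial) difference being that you center the automorphism at $\zeta$ while the paper uses $\omega=\varphi_{\eta}(\xi)$, so your application of Lemma \ref{lem:crucial2} has $r$ and $s$ interchanged where the paper's does not. The exponent bookkeeping you describe checks out.
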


\begin{proof}

We make the change of variables $\omega=\varphi_{\eta}(\xi)$ in the integral, where
$\varphi_{\eta}$ is the M\"obius transformation of the unit ball, as defined in Section 2, as well as apply
the formulas \eqref{eqn:moeb1} and \eqref{eqn:moeb2}. After simplification, we obtain
\begin{align*}
\int\limits_{\ball} & \frac {(1-|\omega|^2)^t dV(\omega)}
{(1-\langle\eta, \omega\rangle)^{r} (1-\langle\omega, \zeta\rangle)^{s}(1+\omega_{n})^{n+1+t-s}
(1+\overline{\omega}_{n})^{n+1+t-r}} \\
& =~ (1-|\eta|^2)^{n+1+t-r} (1-\langle\eta, \zeta\rangle)^{-s}
(1+\eta_n)^{s-n-1-t} (1+\overline{\eta}_n)^{r-n-1-t}\\
& \;\; \times \int\limits_{\ball} \frac {(1-|\xi|^2)^t dV(\xi)}
{(1-\langle \xi, \varphi_{\eta}(\zeta)\rangle)^{s} (1-\langle \xi, \varphi_{\eta}(-e_n)\rangle)^{n+1+t-s}
(1-\langle \varphi_{\eta}(-e_n), \xi\rangle)^{n+1+t-r} }. \notag
\end{align*}
By Lemma \ref{lem:crucial2} and the formula \eqref{eqn:moeb0}, this equals
\begin{align*}
&(1-|\eta|^2)^{n+1+t-r} (1-\langle\eta, \zeta\rangle)^{-s}
(1+\eta_n)^{s-n-1-t} (1+\overline{\eta}_n)^{r-n-1-t}\\
&\qquad \times \frac {C_1(n,r,s,t)}{4} \ \left(1- \left\langle\varphi_{\eta}(-e_{n}),
\varphi_{\eta}(\zeta)\right\rangle\right)^{r-n-1-t}\\
=~& (1-|\eta|^2)^{n+1+t-r} (1-\langle\eta, \zeta\rangle)^{-s}
(1+\eta_n)^{s-n-1-t} (1+\overline{\eta}_n)^{r-n-1-t}\\
&\qquad \times \frac {C_1(n,r,s,t)}{4} \ \left\{
\frac {(1-|\eta|^2) (1+\overline{\zeta}_n)} {(1+\overline{\eta}) (1-\langle\eta, \zeta\rangle)}
\right\}^{r-n-1-t}
\end{align*}
which establishes the formula.
\end{proof}

Now we turn to the proof of Key Lemma.

\vskip8pt

By the change of variables $w=\Phi(\xi)$ in the integral and using \eqref{eqn:identity14phi}, we obtain
\begin{align*}
\int\limits_{\calU} &\frac {\bfrho(w)^{t}} {\bfrho(z,w)^{r} \rho(w,u)^{s}} dV(w)\\
&~=~ \int\limits_{\ball} \frac {\bfrho(\Phi(\xi))^t} {\bfrho(z,\Phi(\xi))^{r}
\bfrho(\Phi(\xi),u)^{s}} \frac {4} {|1+\xi_{n}|^{2(n+1)}} dV(\xi)\\
&~=~ 4 (1+ [\Phi^{-1}(z)]_{n})^{r} (1+ [\overline{\Phi^{-1}(u)}]_{n})^{s} \notag \\
&\qquad \times \int\limits_{\ball}  \frac {(1-|\xi|^2)^t dV(\xi)}
{(1- \langle\Phi^{-1}(z), \xi \rangle)^{r} (1- \langle\xi, \Phi^{-1}(u)\rangle)^{s}
(1+\xi_{n})^{n+1+t-s}(1+\bar{\xi}_{n})^{n+1+t-r}}.
\end{align*}
In view of \eqref{eqn:4factors1}, this equals
\begin{align*}
& C_1(n,r,s,t)\ \left(1+ [\Phi^{-1}(z)]_{n}\right)^{r} \left(1+ [\overline{\Phi^{-1}(u)}]_{n}\right)^{s}
\left(1+[\Phi^{-1}(z)]_{n}\right)^{s-n-1-t} \notag \\
& \quad \times \left(1+[\overline{\Phi^{-1}(u)}]_{n}\right)^{r-n-1-t}
\left(1- \langle\Phi^{-1}(z), \Phi^{-1}(u) \rangle \right)^{n+1+t-r-s}\\
=~ & C_1(n,r,s,t)\ \left\{ \frac {(1+ [\Phi^{-1}(z)]_{n}) (1+ [\overline{\Phi^{-1}(u)}]_{n}) }
{ 1- \langle\Phi^{-1}(z), \Phi^{-1}(u)\rangle } \right\}^{r+s-n-1-t}\\
=~ & C_1(n,r,s,t)\, \bfrho(z,u)^{n+1+t-r-s}
\end{align*}
where we used \eqref{eqn:identity14phi} to obtain the last equality. The proof is complete.


\vskip8pt

We single out a special case of Key Lemma as the following lemma, which will be used repeatedly.

\begin{lem}\label{lem:keylemma}
Let $s,t\in \bbR$. Then we have
\begin{equation}\label{eqn:keylem}
\int\limits_{\calU} \frac {\bfrho(w)^{t}} {|\bfrho(z,w)|^{s}} dV(w) ~=~
\begin{cases}
\dfrac {C_2(n,s,t)} {\bfrho(z)^{s-t-n-1}}, &
\text{ if } t>-1 \text{ and } s-t>n+1\\[12pt]
+\infty, &  otherwise
\end{cases}
\end{equation}
for all $z\in \calU$, where
\[
C_2(n,s,t):=\frac {4 \pi^{n} \Gamma(1+t) \Gamma(s-t-n-1)} {\Gamma^2\left(s/2\right)}.
\]
\end{lem}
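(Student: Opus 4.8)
The statement to prove is Lemma~\ref{lem:keylemma}, i.e.\ the evaluation of $\int_{\calU} \bfrho(w)^{t}/|\bfrho(z,w)|^{s}\, dV(w)$.

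\medskip

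\noindent\textbf{Proof proposal.} The plan is to deduce this lemma from the Key Lemma by specialization. First I would observe that since $\bfrho(z,w) = \overline{\bfrho(w,z)}$ (which follows directly from the definition $\bfrho(z,w) = \frac{i}{2}(\bar w_n - z_n) - \langle z', w'\rangle$), we have $|\bfrho(z,w)|^{s} = \bfrho(z,w)^{s/2}\,\overline{\bfrho(z,w)}^{s/2} = \bfrho(z,w)^{s/2}\,\bfrho(w,z)^{s/2}$. Hence the integrand equals $\bfrho(w)^{t}\big/\big(\bfrho(z,w)^{s/2}\,\bfrho(w,z)^{s/2}\big)$, which is exactly the integrand appearing in \eqref{eqn:keylem2} with $u = z$, $r = s/2$, and $s$ there replaced by $s/2$. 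Applying the Key Lemma with these parameters, and noting that $\bfrho(z,z) = \bfrho(z) > 0$ for $z \in \calU$, gives
\[
\int_{\calU} \frac{\bfrho(w)^{t}}{|\bfrho(z,w)|^{s}}\, dV(w) = \frac{C_1(n, s/2, s/2, t)}{\bfrho(z)^{s - t - n - 1}},
\]
and substituting $r = s = s/2$ into \eqref{eqn:const} yields $C_1(n, s/2, s/2, t) = 4\pi^{n}\Gamma(1+t)\Gamma(s-t-n-1)\big/\Gamma(s/2)^2 = C_2(n,s,t)$, which matches the claimed constant.

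\medskip

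\noindent The hypotheses of the Key Lemma, namely $r, s > 0$, $t > -1$, and $r + s - t > n+1$, translate under $r = s = s/2$ into $s > 0$, $t > -1$, and $s - t > n + 1$; since $s - t > n+1 \geq 2$ already forces $s > 0$, the effective conditions are precisely $t > -1$ and $s - t > n + 1$, agreeing with the first case in \eqref{eqn:keylem}. This handles the finite case.

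\medskip

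\noindent For the divergence claim in the ``otherwise'' case, I would argue directly that the integral is $+\infty$ whenever $t \leq -1$ or $s - t \leq n+1$. By the same change of variables $w = \Phi(\xi)$ used in the proof of the Key Lemma, together with the Jacobian formula \eqref{eqn:jacobian4phi} and the identity \eqref{eqn:identity14phi} evaluated at $u = z$, the integral becomes a constant times $\int_{\ball} (1-|\xi|^2)^{t}\, |1 - \langle \Phi^{-1}(z), \xi\rangle|^{-s}\, |1 + \xi_n|^{s - 2(n+1+t) + 2(n+1)}\, dV(\xi)$, which up to bounded positive factors is comparable to $\int_{\ball} (1-|\xi|^2)^{t}\, |1 - \langle \zeta_0, \xi\rangle|^{-s}\, dV(\xi)$ with $\zeta_0 = \Phi^{-1}(z)$, away from the point $\xi = -e_n$. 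If $t \leq -1$, the factor $(1-|\xi|^2)^{t}$ is non-integrable near the boundary, and one checks divergence by restricting to a region bounded away from $\zeta_0$ and from $-e_n$. If $t > -1$ but $s - t \leq n+1$, the standard estimate for such integrals on the ball (e.g.\ \cite[Theorem 1.12]{Zhu05}) shows the integral diverges. I expect the divergence part to be the only point requiring genuine care, mostly in bookkeeping the behavior near $\xi = -e_n$; but since $\bfrho(z) > 0$ and the integrand on $\calU$ is nonnegative, the cleanest route may be to simply note that if the integral were finite it would, by a monotone-convergence/Fatou argument against the analytic-continuation identity, have to equal the meromorphic expression $C_2(n,s,t)\bfrho(z)^{-(s-t-n-1)}$, whose constant $C_2(n,s,t)$ has a pole (via $\Gamma(1+t)$ or $\Gamma(s-t-n-1)$) exactly on the excluded parameter range — forcing divergence there.
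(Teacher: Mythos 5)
Your treatment of the convergent case is exactly the paper's: the lemma is introduced there as ``a special case of Key Lemma,'' obtained precisely by writing $|\bfrho(z,w)|^{s}=\bfrho(z,w)^{s/2}\bfrho(w,z)^{s/2}$ (legitimate since $\bfrho(z,w)$ lies in the right half-plane and equals $\overline{\bfrho(w,z)}$) and putting $u=z$, $r=s/2$ in \eqref{eqn:keylem2}; your constant check $C_1(n,s/2,s/2,t)=C_2(n,s,t)$ is correct. One nit: $s-t>n+1$ alone does not force $s>0$; you need $t>-1$ as well, which gives $s>n+1+t>n>0$.

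The actual content of the paper's proof is the divergence claim, and here your two proposed routes are not equally sound. The ``cleanest route'' you lean toward --- that a finite integral ``would have to equal'' the meromorphic expression $C_2(n,s,t)\,\bfrho(z)^{-(s-t-n-1)}$, whose constant blows up on the excluded range --- does not work: $\Gamma(1+t)$ and $\Gamma(s-t-n-1)$ have poles only at isolated integer arguments, so for generic excluded parameters (say $t=-3/2$, or $s-t-n-1=-1/2$) the constant $C_2(n,s,t)$ is a perfectly finite (possibly negative) number, and no Fatou or monotone-convergence argument identifies a divergent nonnegative integral with an analytic continuation. Your primary route (Cayley transform plus Forelli--Rudin estimates near $\xi=-e_n$) is viable, but the exponent of $|1+\xi_n|$ is off: the Jacobian contributes $-2(n+1)$, not $+2(n+1)$, so the correct power is $s-2t-2(n+1)$; with your value $s-2t$ the Forelli--Rudin criterion returns the wrong finiteness condition. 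The paper avoids the ball altogether: it applies a Heisenberg translation sending $z$ to $\bfrho(z)\bfi$ (under which $\bfrho(\cdot,\cdot)$ is invariant) and then uses Fubini to factor the integral as
\[
2^{s}\Bigl(\,\int_{\ImPt w_n>0}\frac{(\ImPt w_n)^{n-1+t}}{|w_n+\bfrho(z)i|^{s}}\,dm_2(w_n)\Bigr)\Bigl(\,\int_{|w'|<1}(1-|w'|^{2})^{t}\,dm_{2n-2}(w')\Bigr),
\]
whose two factors are finite exactly when $s-(n-1+t)>2$ and $t>-1$ respectively --- an elementary, self-contained computation. If you keep your route, fix the exponent bookkeeping and drop the analytic-continuation argument.
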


\begin{proof}
It remains to show that the integral is finite if and only if $t > -1$ and $s-t > n+1$.

Before proceeding, we recall the definition of the Heisenberg group and some basic facts which can be found in
\cite[Chapter XII]{Ste93}.

We denote by $\bbH^{n-1}$ the Heisenberg group, that is, the set
\[
\bbC^{n-1} \times \bbR = \{ [\zeta,t]: \zeta\in \bbC^{n-1}, t\in \bbR\}
\]
endowed with the group operation
\[
[\zeta,t]\cdot [\eta,s]=[\zeta+\eta, t+s+2 \mathrm{Im}\langle\zeta,\eta\rangle)].
\]
To each element $h=[\zeta,t]$ of $\bbH^{n-1}$, we associate the following (holomorphic) affine self-mapping of
$\calU$:
\begin{equation}\label{eqn:groupaction}
h:\; (z^{\prime},z_{n}) \longmapsto (z^{\prime}+\zeta, z_{n}+t+ 2i \langle z^{\prime}, \zeta\rangle + i|\zeta|^2).
\end{equation}
It is easy to check that
\begin{equation}\label{eqn:h-inv}
\bfrho(h(z),h(w))=\bfrho(z,w)
\end{equation}
for any $z,w\in \calU$ and any $h\in \bbH^{n-1}$.

For fixed $z\in \calU$, we put $h=[-z^{\prime}, -\mathrm{Re} z_{n}]\in \bbH^{n-1}$.
It is easy to check that $h(z) = \bfrho(z)\bfi$, where $\bfi=(0^{\prime},i)$, and
\[
\bfrho(h(z),w) = \frac {i}{2}(\overline{w}_n-\bfrho(z)i)
\]
for all $w\in \calU$. Using \eqref{eqn:h-inv} and making the change of variables $w\mapsto h(w)$ in the integral, we see that
\begin{align*}
\int\limits_{\calU} \frac {\bfrho(w)^{t}} {|\bfrho(z,w)|^{s}} dV(w)
~=~& \int\limits_{\calU} \frac {\bfrho(w)^{t}} {|\bfrho(h(z),w)|^{s}} dV(w) \\
~=~& 2^s \int\limits_{\calU} \frac {(\ImPt w_n -|w^{\prime}|^2)^{t}} {|w_n + \bfrho(z) i|^{s}} dV(w).
\end{align*}
By Fubini's theorem, this equals
\begin{align*}
& 2^s \, \int\limits_{\ImPt w_n >0} \frac {1} {|w_n + \bfrho(z) i|^{s}}
\Bigg\{\int\limits_{|w^{\prime}|< (\ImPt w_n)^{1/2}} (\ImPt w_n -|w^{\prime}|^2)^{t} dm_{2n-2}(w^{\prime}) \Bigg\} dm_2(w_n)\\
&\quad \quad ~=~
2^s \, \Bigg\{\int\limits_{\ImPt w_n >0} \frac {(\ImPt w_n)^{n-1+t}} {|w_n + \bfrho(z) i|^{s}} dm_2(w_n)\Bigg\}
\Bigg\{\int\limits_{|w^{\prime}|<1} (1 -|w^{\prime}|^2)^{t} dm_{2n-2}(w^{\prime}) \Bigg\},
\end{align*}
which is finite if and only if $t> -1$ and $s-(n-1+t)> 2$.
\end{proof}

\section{The proof of Theorem \ref{thm:main}}

\subsubsection*{(ii) $\Rightarrow$ (i):} Obvious.

\subsubsection*{(i) $\Rightarrow$ (iii):}
Suppose that $T$ is bounded on $L^p(\calU, dV_{\alpha})$.

\vskip8pt

\noindent \textit{Case 1: $p=\infty$.}
Note that the constant function $\mathbf{1}$ cannot serve as a test function at this moment,
since $T \mathbf{1}(z)\equiv 0$.
Instead, we consider the function
\[
f_z(w):=\frac {\bfrho(z,w)^{c}} {|\bfrho(z,w)|^{c}}, \qquad w\in \calU.
\]
Each $f_z$ is a unit vector in $L^{\infty}(\calU)$ and
\[
(T f_z)(z)= \bfrho(z)^{a} \int\limits_{\calU} \frac { \bfrho (w)^{b}} { |\bfrho(z,w)|^{c}} dV(w)
\]
for every $z\in \calU$. Since $|(T f_z)(z)|\leq \|T\|_{\infty\to \infty}$ for all $z\in \calU$,
where  $\|T\|_{\infty\to \infty}$ denotes the operator norm of $T$ acting on $L^{\infty}(\calU)$,
by Lemma \ref{lem:keylemma}, we have
\[
\begin{cases}
b > -1,\\
c>n+1+b, \\
c-n-1-b = a,
\end{cases}
\]
which is clearly nothing but \eqref{eqn:condns4infty}.

\vskip8pt

\noindent \textit{Case 2: $p=1$.}
Note that the boundedness of  $T$ on $L^1(\calU, dV_{\alpha})$  implies the boundedness of $T^{\ast}$
on  $L^{\infty}(\calU)$, where $T^{\ast}$ is the adjoint of $T$. It is easy to see that
\begin{equation}\label{eqn:adjoint}
T^{\ast} f(z) = \bfrho(z)^{b-\alpha} \int\limits_{\calU} \frac { \bfrho (w)^{a+\alpha}} { \bfrho(z,w)^{c}} f(w) dV(w).
\end{equation}
So we can apply the previous case to $T^{\ast}$ to obtain
\[
\begin{cases}
a+\alpha>-1,\\
c>n+1+(a+\alpha), \\
c-n-1-(a+\alpha) = b-\alpha,
\end{cases}
\]
which implies
\[
\begin{cases}
-a < \alpha + 1 < b+1,\\
c=n+1+a+b.
\end{cases}
\]

\vskip8pt

\noindent \textit{Case 3: $1<p<\infty$.}

We first show that $c>0$.
In order that $Tf$ be always well-defined for
 $f\in L^p(\calU,dV_{\alpha})$,  it is necessary and sufficient that
 \[
 \int\limits_{\calU} \frac {\bfrho(w)^{bq+\alpha}} {|\bfrho(z,w)|^{cq}} dV(w) ~<~ +\infty
 \]
 for all $z\in \calU$, where $q:p/(p-1)$ is the conjugate exponent of $p$.
 Again by Lemma \ref{lem:keylemma},  this happens if and only if
 \[
 \begin{cases}
bq+\alpha > -1,\\
cq - bq -\alpha > n+1.
\end{cases}
 \]
Summing up the two inequalities, we get  $c > n/q >0$.

For $\beta>0$, we put
 \[
 f_{\beta}(z) ~:=~ \frac {\bfrho(z)^t} {\bfrho(z,\beta \bfi)^{s}}, \qquad z\in \calU,
 \]
 where $s,t$ are real parameters satisfying the conditions
 \begin{align}
s ~>~& 0, \tag{C.1}  \\
t ~>~& \max \left\{-\frac {1+\alpha}{p},\ -1-b\right\}, \tag{C.2} \\
s-t ~>~& \max \left\{ \frac {n+1+\alpha}{p},\ n+1+b-c\right\}. \tag{C.3}
 \end{align}
By Lemma \ref{lem:keylemma}, Conditions (C.1)--(C.3) guarantee that $f_{\beta}\in L^p(\calU, dV_{\alpha})$ and
 \begin{equation}\label{eqn:normoff_st}
 \|f_{\beta}\|_{p,\alpha}^{p} ~=~ C_3 (n,\alpha,p,s,t)\, \beta^{n+1+\alpha-p(s-t)},
 \end{equation}
where
\[
C_3 (n,\alpha,p,s,t) := \frac {4\pi^n \Gamma(pt+1+\alpha) \Gamma (p(s-t)-n-1-\alpha)}{\Gamma^2(ps/2)}.
\]
Also, in view of Conditions (C.1)--(C.3) and that $c>0$, we can apply Key Lemma to obtain
 \begin{align*}\label{eqn:Tf_st}
 (Tf_{\beta})(z) ~=~& \bfrho(z)^{a} \int\limits_{\calU} \frac {\bfrho(w)^{b+t}} {\bfrho(z,w)^{c} \bfrho(w,\beta\bfi)^{s}} dV(w) \notag\\
 =~& C_4(n,b,c,s,t) \frac {\bfrho(z)^{a}}{\bfrho(z, \beta\bfi)^{c-b-n-1+s-t}},
 \end{align*}
where
\[
C_4(n,b,c,s,t) ~:=~ \frac {4\pi^n \Gamma(b+t+1) \Gamma (c-b-n-1+s-t)}{\Gamma(c)\Gamma(s)}.
\]
Since $T f_{\beta}\in  L^p(\calU, dV_{\alpha})$, again by Lemma \ref{lem:keylemma},
it is necessary that
\begin{gather}
pa+\alpha>-1,  \label{eqn:paracondn1}\\
p(c-a-b-n-1) + p(s-t) -n-1-\alpha>0. \notag
\end{gather}
Moreover, we have
\begin{equation}\label{eqn:normofTf}
 \|Tf_{\beta}\|_{p,\alpha}^{p} ~=~ C_5(n,\alpha,p,b,c,s,t)\, \beta^{n+1+\alpha-p(s-t)+p(n+1+a+b-c)},
\end{equation}
where $C_5(n,\alpha,p,b,c,s,t)$ equals
\begin{align*}
& C_4(n,b,c,s,t)^p \\
& \quad \times \frac {4\pi^n \Gamma(1+pa+\alpha) \Gamma (p(c-a-b-n-1+s-t)-n-1-\alpha)}
 {\Gamma^2(p(c-b-n-1+s-t)/2)}.
\end{align*}
Since $T$ is bounded on $L^p(\calU, dV_{\alpha})$, there is a positive constant
$C$, independent of $\beta$, such that $\|T f_{\beta}\|_{p,\alpha} \leq C \|f_{\beta}\|_{p,\alpha}$
for all $\beta\in (0,\infty)$. Taking \eqref{eqn:normoff_st} and \eqref{eqn:normofTf} into account,
we can find another positive constant $C^{\prime}$,
independent of $\beta$, such that
\[
\beta^{n+1+\alpha-p(s-t)+p(n+1+a+b-c)} ~\leq~ C^{\prime}\, \beta^{n+1+\alpha-p(s-t)}
\]
for all $\beta\in (0,\infty)$. But this is true only when $c=n+1+a+b$.

Having proved that $c=n+1+a+b$ and $-pa<\alpha+1$, we proceed to show that $\alpha+1<p(b+1)$.
Note that the boundedness of $T$ on $L^p(\calU, dV_{\alpha})$ is
equivalent to the boundedness of $T^{\ast}$ on $L^q(\calU, dV_{\alpha})$,
where $T^{\ast}$ is the adjoint of $T$, as is given by \eqref{eqn:adjoint}.
Applying \eqref{eqn:paracondn1} to $T^{\ast}$, we conclude that
\[
\alpha+1 > -q(b-\alpha),
\]
which is exactly the same as
\[
\alpha+1 < p(b+1).
\]

%

\subsubsection*{(iii) $\Rightarrow$ (ii):}

The cases $p=1$ and $p=\infty$ are direct consequences
of Lemma \ref{lem:keylemma}.

In the case $1 < p < \infty$, the proof appeals to Schur's test. Let
\[
Q(z,w)= \frac {\bfrho(z)^{a} \bfrho (w)^{b-\alpha}} {|\bfrho(z,w)|^{n+1+a+b}}.
\]
and $g(z)=\bfrho(z)^{-(1+\alpha)/(pq)}$, where $q=p/(p-1)$.
Again, it follows from Lemma \ref{lem:keylemma} that
\begin{align*}
\int\limits_{\calU} Q(z,w) & g(w)^{q} \bfrho(w)^{\alpha} dV(w) \\
~=~& \bfrho(z)^{a}
\int\limits_{\calU} \frac {\bfrho(w)^{b-(1+\alpha)/p}} {|\bfrho(z,w)|^{n+1+a+b}} dV(w)\\
=~& \bfrho(z)^{a}\, \frac {4\pi^n \Gamma(1+b-(1+\alpha)/p) \Gamma(a+(1+\alpha)/p)} {\Gamma^2((n+1+a+b)/2)}\,
\bfrho(z)^{-a-(1+\alpha)/p}\\
~=~& \frac {4\pi^n \Gamma(1+b-(1+\alpha)/p) \Gamma(a+(1+\alpha)/p)} {\Gamma^2((n+1+a+b)/2)}\, g(z)^{q}
\end{align*}
holds for every $z\in \calU$. Similarly,
\begin{align*}
\int\limits_{\calU} Q(z,w) g(z)^p \bfrho(z)^{\alpha}dV(z)
~=~ \frac {4\pi^n \Gamma(1+b-(1+\alpha)/p) \Gamma(a+(1+\alpha)/p)} {\Gamma^2((n+1+a+b)/2)}\, g(w)^{p}
\end{align*}
holds for every $w\in \calU$. Hence, by Lemma \ref{lem:schurtest}, $S$ is bounded
on $L^p(\calU, dV_{\alpha})$ with
\[
\|S\| ~\leq~ \frac {4\pi^n \Gamma(a+(1+\alpha)/p) \Gamma(1+b-(1+\alpha)/p)} {\Gamma^2((n+1+a+b)/2)}
\]
The proof is complete.

\section{Applications}


We present two examples to illustrate the use of our main result.

In order to state the first example we need to introduce more notation.
It is known that the Bergman kernel function $K_{\Omega}$ induces a Riemannian
metric on a domain $\Omega$ in $\bbC^n$. The infinitesimal Bergman metric is defined by
\[
g_{i,j}^{\Omega}(z) = \frac {1}{n+1} \frac {\partial^2 \log K_{\Omega}(z,z)}{\partial z_i \partial \bar{z}_j},
\quad i,j=1,2,\ldots,n,
\]
and the complex matrix
\[
B(z)=\big(g_{i,j}^{\Omega}(z)\big)_{1\leq i,j \leq n}
\]
is called the Bergman matrix of $\Omega$.
For a $C^1$ curve $\gamma:[0,1]\to \Omega$, the Bergman length of $\gamma$ is defined by
\[
\ell(\gamma):= \int\limits_0^1 \langle B(\gamma(t)) \gamma^{\prime}(t), \gamma^{\prime}(t) \rangle dt.
\]
If $z,w\in \Omega$, then their Bergman distance is
\[
\delta_{\Omega}(z,w):=\inf \{\ell(\gamma): \gamma(0)=z, \gamma(1)=w \},
\]
where the infimum is taken over all $C^1$ curves from $z$ to $w$.
If $\Omega_1$, $\Omega_2$ are two domains in $\bbC^n$ and $\psi$ is a
biholomorphic mapping of $\Omega_1$ onto $\Omega_2$, then $\delta_{\Omega_1}(z,w)=
\delta_{\Omega_2}(\psi(z),\psi(w))$ for all $z,w\in \Omega_1$.
Hence,
\[
\delta_{\calU}(z,w) = \delta_{\ball}(\Phi^{-1}(z),\Phi^{-1}(w))
= \tanh^{-1} \left(\left|\varphi_{\Phi^{-1}(z)}(\Phi^{-1}(w)\right|\right).
\]
Furthermore, a computation shows that
\begin{equation}\label{eqn:hyperdist}
\delta_{\calU}(z,w)=\tanh^{-1} \sqrt{1-\frac {\bfrho(z)\bfrho(w)}{|\bfrho(z,w)|^2}}.
\end{equation}
%

Let $a, b$ and $c$ be real numbers. We consider the operator
\[
S_{a,b}^{c} f(z) := \bfrho(z)^{a} \int\limits_{\calU} \frac { \bfrho (w)^{b} \delta_{\calU}(z,w)^c} {|\bfrho(z,w)|^{n+1+a+b}} f(w) dV(w).
\]
It is a modification of the integral operator $S_{a,b,c}$ in Theorem \ref{thm:main},
with an extra unbounded factor $\delta(z,w)^c$ in the integrand.

\begin{thm}\label{thm:appl1}
Suppose $\alpha\in \bbR$ and $1\leq p <\infty$. If $-pa<\alpha+1<p(b+1)$ and $c\geq 0$ then
the operator $S_{a,b}^{c}$ is bounded on $L^p(\calU, dV_{\alpha})$.
\end{thm}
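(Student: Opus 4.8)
The plan is to reduce Theorem~\ref{thm:appl1} to the already-established boundedness of the operator $S_{a,b,n+1+a+b}$ of Theorem~\ref{thm:main}, by absorbing the extra factor $\delta_{\calU}(z,w)^c$ into a slight perturbation of the exponent $n+1+a+b$. The starting observation is the identity \eqref{eqn:hyperdist}, which shows that $\delta_{\calU}(z,w)$ depends only on the quantity $u:=\bfrho(z)\bfrho(w)/|\bfrho(z,w)|^2\in(0,1]$ via $\delta_{\calU}(z,w)=\tanh^{-1}\sqrt{1-u}$. Since $\bfrho(z)\bfrho(w)\le|\bfrho(z,w)|^2$ (a Cauchy--Schwarz type inequality, equivalent to $u\in(0,1]$), we may estimate $\delta_{\calU}(z,w)$ in terms of $|\bfrho(z,w)|$: as $u\to 0$ one has $\tanh^{-1}\sqrt{1-u}\sim \tfrac12\log(4/u)$, and for $u$ bounded away from $0$ the distance is bounded. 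The key elementary lemma I would isolate is that for every $\varepsilon>0$ there is a constant $C_\varepsilon$ with
\[
\delta_{\calU}(z,w)^c ~\le~ C_\varepsilon \left(\frac{|\bfrho(z,w)|^2}{\bfrho(z)\bfrho(w)}\right)^{\varepsilon}
\]
for all $z,w\in\calU$, since any power of a logarithm of $1/u$ is dominated by $u^{-\varepsilon}$ near $u=0$ and everything is bounded for $u$ away from $0$ (here $c\ge 0$ is used so that the left side is genuinely a nonnegative power of the distance).

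Granting this, I would plug the estimate into the kernel of $S_{a,b}^c$: since the integrand is nonnegative, for $f\ge 0$,
\[
S_{a,b}^{c}f(z) ~\le~ C_\varepsilon\,\bfrho(z)^{a-\varepsilon}\int\limits_{\calU}\frac{\bfrho(w)^{b-\varepsilon}}{|\bfrho(z,w)|^{n+1+a+b-2\varepsilon}}f(w)\,dV(w)
~=~ C_\varepsilon\, S_{a-\varepsilon,\,b-\varepsilon,\,n+1+(a-\varepsilon)+(b-\varepsilon)}f(z),
\]
and the general case follows by applying this to $|f|$. The exponents are tuned precisely so that the third parameter equals $n+1+a'+b'$ with $a'=a-\varepsilon$, $b'=b-\varepsilon$, which is exactly the relation required by condition \eqref{eqn:condns} of Theorem~\ref{thm:main}. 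It therefore remains to check that the remaining two inequalities of \eqref{eqn:condns} hold for the perturbed parameters: we need $-p(a-\varepsilon)<\alpha+1<p(b-\varepsilon+1)$. By hypothesis $-pa<\alpha+1<p(b+1)$, and both inequalities are strict, so they persist under a sufficiently small perturbation; concretely, choose $\varepsilon>0$ small enough that $\alpha+1<p(b+1)-p\varepsilon$ still holds (the other inequality $-pa+p\varepsilon<\alpha+1$ only gets easier as $\varepsilon$ increases, so it is automatic). With such an $\varepsilon$ fixed, Theorem~\ref{thm:main} gives that $S_{a-\varepsilon,b-\varepsilon,n+1+(a-\varepsilon)+(b-\varepsilon)}$ is bounded on $L^p(\calU,dV_\alpha)$, and the pointwise domination transfers the boundedness to $S_{a,b}^c$ with $\|S_{a,b}^c\|\le C_\varepsilon\,\|S_{a-\varepsilon,b-\varepsilon,n+1+(a-\varepsilon)+(b-\varepsilon)}\|$.

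The main obstacle, and the only place requiring genuine care, is the elementary lemma bounding $\delta_{\calU}(z,w)^c$: one must verify the uniform inequality $\tanh^{-1}\sqrt{1-u}\le C_\varepsilon\,u^{-\varepsilon/c}$ for $u\in(0,1]$ (when $c>0$; the case $c=0$ is trivial since then $\delta_{\calU}(z,w)^c\equiv 1$ and the statement is immediate from Theorem~\ref{thm:main} with no perturbation at all). This is a one-variable estimate: the function $u\mapsto u^{\varepsilon'}\tanh^{-1}\sqrt{1-u}$ with $\varepsilon'=\varepsilon/c$ is continuous on $(0,1]$, tends to $0$ as $u\to 0^+$ (polynomial beats logarithm), and is bounded on any compact subset, hence bounded on all of $(0,1]$; one also uses $\bfrho(z)\bfrho(w)\le|\bfrho(z,w)|^2$, which follows from \eqref{eqn:identity14phi} and the Schwarz inequality $|\langle\eta,\xi\rangle|\le 1$ on $\ball$ together with $1-|\langle\eta,\xi\rangle|\le|1-\langle\eta,\xi\rangle|$. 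Everything else is bookkeeping with the parameter inequalities and an appeal to Theorem~\ref{thm:main}. Note this argument only covers $1\le p<\infty$ as in the statement; for $p<\infty$ the strictness of the hypotheses is exactly what makes room for the perturbation.
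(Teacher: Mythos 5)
Your proposal is correct and follows essentially the same route as the paper: both bound the Bergman distance logarithmically via \eqref{eqn:hyperdist}, dominate the logarithm by a small power of $|\bfrho(z,w)|^2/(\bfrho(z)\bfrho(w))$, and absorb that power into perturbed parameters $a-\varepsilon,\ b-\varepsilon$ satisfying \eqref{eqn:condns}, to which Theorem \ref{thm:main} applies (the paper keeps the bounded regime as a separate term $S_{a,b,n+1+a+b}(|f|)$ while you fold it into the single perturbed operator, which is fine since $u^{-\varepsilon}\ge 1$ for $u\in(0,1]$). One immaterial slip: the inequality $-pa+p\varepsilon<\alpha+1$ gets \emph{harder}, not easier, as $\varepsilon$ grows, but your earlier (correct) remark that both strict inequalities persist for sufficiently small $\varepsilon$ is all that is needed.
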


\begin{proof}
Pick $\epsilon >0$ so small that $-p(a-c\epsilon)<\alpha+1<p(b+1-c\epsilon)$.
Since $\log x < x^{\epsilon}$ holds for any $x>0$ and any $\epsilon>0$, it follows from \eqref{eqn:hyperdist} that
\begin{align*}
\delta_{\calU}(z,w) ~\lesssim~& \log \frac {4|\bfrho(z,w)|^{2}} {\bfrho(z) \bfrho(w)} ~\lesssim~ 1+ \frac {|\bfrho(z,w)|^{2\epsilon}} {\bfrho(z)^{\epsilon} \bfrho(w)^{\epsilon}}.
\end{align*}
It follows that
\begin{align*}
|S_{a,b}^{c} (f)(z)| ~\lesssim~& |S_{a,b,n+1+a+b} (|f|)(z)|\\
& \quad +  \bfrho(z)^{a-c\epsilon} \int\limits_{\calU} \frac { \bfrho (w)^{b-c\epsilon}}
{|\bfrho(z,w)|^{n+1+a+b-2c\epsilon}} |f(w)| dV(w) \\
~=~& |S_{a,b,n+1+a+b} (|f|)(z)| + |S_{\tilde{a},\tilde{b},n+1+\tilde{a}+\tilde{b}} (|f|)(z)|,
\end{align*}
where $\tilde{a}=a-c\epsilon$ and $\tilde{b}=b-c\epsilon$. The desired result then follows from Theorem \ref{thm:main}.
\end{proof}


We denote by $A_{\alpha}^p(\calU)$ the Bergman space, that is, the closed subspace of $L^p(\calU, \bfrho^{\alpha})$ consisting of
holomorphic functions on $\calU$. As usual,  we write $\partial_n:=\partial/(\partial z_n)$.
The following result plays an important role in the study of the Besov spaces over the Siegel upper half-space.

\begin{thm}
Suppose $1< p < \infty$, $\alpha > 1/p-1$ and $N\in \bbN$.
Then $\partial_n^N$ is a bounded
linear operator from $A_{\alpha}^p(\calU)$ into $A_{\alpha+pN}^p (\calU)$.
\end{thm}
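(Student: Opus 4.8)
The plan is to express $\partial_n^N f$ as an integral operator of the type treated in Theorem \ref{thm:main} and then to quote that theorem. Note first that the hypothesis $\alpha>1/p-1$ means $\alpha+1>1/p>0$. The first and main step is a reproducing formula: fix a real number $\beta$ with $\beta>(\alpha+1)/p-1$ (so, in particular, $\beta>-1$ and $p(\beta+1)>\alpha+1$), and record that
\[
f(z)~=~\frac{\Gamma(n+1+\beta)}{4\pi^{n}\,\Gamma(\beta+1)}\int\limits_{\calU}\frac{\bfrho(w)^{\beta}}{\bfrho(z,w)^{n+1+\beta}}\,f(w)\,dV(w),\qquad z\in\calU,
\]
holds for every $f\in A^p_{\alpha}(\calU)$. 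This is the unbounded-model counterpart of the classical identity on the unit ball (see, e.g., \cite{Zhu05}), obtained from the latter by the change of variables $w=\Phi(\xi)$ together with \eqref{eqn:identity14phi} and \eqref{eqn:jacobian4phi}. Convergence of the integral for every $f\in A^p_{\alpha}(\calU)$ follows by writing $\bfrho(w)^{\beta}=\bfrho(w)^{\alpha/p}\bfrho(w)^{\beta-\alpha/p}$ and applying H\"older's inequality with exponents $p$ and $q:=p/(p-1)$, which bounds the integral by $\|f\|_{p,\alpha}$ times $\bigl(\int_{\calU}\bfrho(w)^{(\beta-\alpha/p)q}|\bfrho(z,w)|^{-(n+1+\beta)q}\,dV(w)\bigr)^{1/q}$; Lemma \ref{lem:keylemma} evaluates this last integral, and the choice $\beta>(\alpha+1)/p-1$ is precisely what makes it finite.

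Next I would differentiate. Since $\partial_n\bfrho(z,w)=-\tfrac{i}{2}$, the function $\partial_n^{N}\bfrho(z,w)^{-(n+1+\beta)}$ is a nonzero constant multiple of $\bfrho(z,w)^{-(n+1+\beta+N)}$, the constant $c(n,\beta,N)$ depending only on $n$, $\beta$, $N$. Because the integral above converges locally uniformly in $z$ (by the same H\"older estimate), one may differentiate $N$ times under the integral sign; this shows at once that $\partial_n^{N}f$ is holomorphic and that
\[
\partial_n^{N}f(z)~=~c(n,\beta,N)\int\limits_{\calU}\frac{\bfrho(w)^{\beta}}{\bfrho(z,w)^{n+1+\beta+N}}\,f(w)\,dV(w),
\]
whence the pointwise estimate
\[
\bfrho(z)^{N}\bigl|\partial_n^{N}f(z)\bigr|~\le~|c(n,\beta,N)|\;S_{N,\,\beta,\,n+1+\beta+N}\bigl(|f|\bigr)(z),\qquad z\in\calU.
\]

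Finally I would invoke Theorem \ref{thm:main} with the parameters $a=N$, $b=\beta$, $c=n+1+\beta+N$. These satisfy $c=n+1+a+b$, while $-pN<\alpha+1<p(\beta+1)$ holds by the choice of $\beta$ (the left inequality being immediate from $\alpha+1>0$); hence $S_{N,\beta,n+1+\beta+N}$ is bounded on $L^p(\calU,dV_{\alpha})$. Raising the pointwise estimate to the $p$-th power, integrating against $dV_{\alpha}$, and using $\int_{\calU}\bigl(\bfrho(z)^{N}|\partial_n^{N}f(z)|\bigr)^{p}\,dV_{\alpha}(z)=\|\partial_n^{N}f\|_{p,\alpha+pN}^{p}$, one obtains $\|\partial_n^{N}f\|_{p,\alpha+pN}\le C\,\|f\|_{p,\alpha}$, which, together with the holomorphy of $\partial_n^{N}f$, is the assertion.

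The argument is essentially formal once the two analytic inputs are secured: the reproducing formula and the legitimacy of moving $\partial_n^{N}$ inside the integral. Both are routine transfers from the unit ball, so the step I expect to demand the most care is the first one --- verifying that the reproducing formula is valid on all of $A^p_{\alpha}(\calU)$, which is where the precise hypothesis $\alpha>1/p-1$ enters.
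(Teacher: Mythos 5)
Your proposal is correct and follows essentially the same route as the paper: a reproducing formula, differentiation under the integral sign, the pointwise bound $\bfrho(z)^N|\partial_n^N f(z)|\lesssim S_{N,\beta,n+1+\beta+N}(|f|)(z)$, and an application of Theorem \ref{thm:main}. The only difference is that the paper takes $\beta=\alpha$ and simply cites \cite{DK93} for the reproducing formula on $\calU$ (which is exactly where the hypothesis $\alpha>1/p-1$ enters), rather than transferring it from the unit ball as you sketch.
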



\begin{proof}
According to \cite[Theorem 2.1]{DK93}, if $f\in A_{\alpha}^p(\calU)$ with $p$ and $\alpha$ satisfying
the assumption of the theorem, then
\[
f(z)= c_{\alpha} \int\limits_{\calU} f(w) \frac {\bfrho(w)^{\alpha}} {\bfrho(z,w)^{n+1+\alpha}} dV(w),
\]
where
\[
c_{\alpha} ~:=~ \frac {\Gamma(n+1+\alpha)} {4\pi^n \Gamma(1+\alpha)}.
\]
It follows that
\begin{align*}
\big|\bfrho(z)^N \partial_n^N f(z)\big| ~\lesssim~& \bfrho(z)^N \int\limits_{\calU} |f(w)| \frac {\bfrho(w)^{\alpha}}
{|\bfrho(z,w)|^{n+1+\alpha+N}} dV(w) \\
=~& S_{N,\alpha,n+1+\alpha+N}(|f|)(z).
\end{align*}
By Theorem \ref{thm:main}, this implies
\[
\|\partial_n^N f\|_{p,\alpha+pN} ~=~ \|\bfrho^N \partial_n^N f\|_{p,\alpha} ~\lesssim~ \|f\|_{p,\alpha},
\]
as asserted.
\end{proof}

\subsection*{Acknowledgement}
We are grateful to an anonymous referee for several valuable suggestions and especially
for pointing out a gap in the proof of Theorem 1 in the original version of this paper.
We also wish to thank Professor H. Turgay Kaptanoglu for constructive comments and for
bringing the paper by Ruhan Zhao to our attention.

\end{document}